\newtheorem{thm}{Theorem}[section]
\newtheorem{lemma}[thm]{Lemma}
\newtheorem{cor}[thm]{Corollary}
\newtheorem{prop}[thm]{Proposition}
\theoremstyle{definition}
\newtheorem{rem}[thm]{Remark}
\newtheorem{quest}[thm]{Question}
\numberwithin{equation}{thm}
\def\CN{{\mathcal{N}}}
\def\fz{\mathfrak z}
\def\sl{\mathfrak{sl}}
\def\Dim{\operatorname{Dim}\nolimits}
\def\HHH{\operatorname{H}\nolimits}
\def\fP{\mathfrak P}
\def\fv{\mathfrak v}
\def\fg{\mathfrak g}
\def\fu{\mathfrak u}
\def\fn{\mathfrak n}
\def\fa{\mathfrak a}
\def\fb{\mathfrak b}
\title[On the structure of cohomology rings of p-nilpotent Lie algebras]{On the structure of cohomology rings of p-nilpotent Lie algebras (Unabridged Version)}
\author[Jon F. Carlson]{Jon F. Carlson}
\thanks{Research of the first author partially supported by NSF grant
DMS-1001102}
\address{Department of Mathematics, University of Georgia,
Athens, Georgia 30602, USA}
\email{jfc@math.uga.edu}
\author[Daniel K. Nakano]{Daniel K. Nakano}
\thanks{Research of the second author partially supported by NSF grant
DMS-1002135}
\address{Department of Mathematics, University of Georgia,
Athens, Georgia 30602, USA}
\email{nakano@math.uga.edu}
\date\today
\subjclass{20C20}
\begin{document}

\begin{abstract} 
In this paper the authors investigate the structure the restricted 
Lie algebra cohomology of p-nilpotent Lie algebras with trivial p-power operation.  
Our study is facilitated by a spectral sequence whose $E_{2}$-term 
is the tensor product of the symmetric algebra on the dual of the Lie algebra
with the ordinary Lie algebra cohomology and converges to the 
restricted cohomology ring. In many cases this spectral sequence collapses, and
thus, the restricted Lie algebra cohomology is Cohen-Macaulay. 
A stronger result involves the collapsing of the spectral sequence 
and the cohomology ring identifying as ring with the $E_{2}$-term. 
We present criteria for the collapsing of this spectral sequence 
and provide many examples where
the ring isomorphism fails. Furthermore, we show that there are 
instances when the spectral sequence does not collapse and 
yields cohomology rings which are 
not Cohen-Macaulay. 
\end{abstract}

\maketitle

\section{Introduction} One of the major challenges 
in group cohomology is the computation of the 
cohomology of nilpotent groups. Such computations 
are important because general questions about modular group 
cohomology can often be reduced to questions that 
concern only the cohomology of its 
Sylow $p$-subgroup. The structure of the cohomology of $p$-groups can be quite 
complicated, but in the case when the cohomology ring is Cohen-Macaulay 
(i.e, when the depth equals the Krull dimension), the homological algebra
of the representation theory has more orderly structural features.
In the realm of Lie theory and modular representations 
of algebraic groups the nilpotent restricted $p$-Lie algebras
play a similar role to that of the $p$-groups in group 
representations. For example, an element in the restricted 
cohomology ring of a restricted Lie algebra is nilpotent if and only if
its restriction to every nilpotent subalgebra is nilpotent. 
In this paper we address some 
basic questions on the structure of the cohomology 
rings for these algebras. 

Suppose that $({\fn},[p])$ is a nilpotent restricted 
$p$-Lie algebra and that $k$ is an algebraically closed field of 
characteristic $p>0$. The spectrum of the cohomology 
ring identifies with the restricted nullcone 
${\mathcal N}_{1}({\mathfrak n})=\{x\in {\mathfrak n}:\ x^{[p]}=0\}$. 
If we assume that $p > 2$, then there 
is a spectral sequence \cite{FP}
\[
E_2^{2i,j} = S^{2i}(\fn^*)^{(1)}\otimes \HHH^{j}(\fn,k) \Rightarrow 
\HHH^{2i+j}(u(\fn), k)
\]
where $S^{*}({\fn}^{*})^{(1)}$ is the Frobenius twist of the 
symmetric algebra on the dual of the underlying vector space of $\fn$,
$\HHH^{*}(\fn,k)$ is the ordinary Lie algebra of $\fn$, and 
$\HHH^{*}(u(\fn), k)$, is the cohomology ring of the restricted enveloping algebra 
$u(\fn)$ of $\fn$.  
There are many cases in which it is known that 
the spectral sequence collapses at the $E_2$ page, so that 
$E_2^{*,*}$ is isomorphic to the associated graded 
ring of $\HHH^*(u(\fn),k)$. In this situation, the  
cohomology ring $\HHH^*(u(\fn),k)$ is a free module 
over the symmetric algebra $S^{*}({\fn}^{*})^{(1)}$, 
and the cohomology ring is Cohen-Macaulay. In 1986, 
Friedlander and Parshall \cite{FP} showed this happens 
when $\fn$ is the nilpotent 
radical of the Borel subalgebra of the restricted
Lie algebra of an algebraic group, provided $p>h$ 
where $h$ is the Coxeter number of the associated 
root system. Twenty five years later, Drupieski, Ngo 
and the second author \cite{DNN}
showed that a stronger result holds when $p\geq 2(h-1)$, 
that is, there is a ring isomorphism $\HHH^*(u(\fn), k) 
\cong S^{*}({\fn}^{*})^{(1)}\otimes \HHH^{*}(\fn,k)$. 

The investigations of this paper were originally inspired 
by a computer calculation by the first author 
which demonstrated that if $\fn$ is the Lie 
algebra of the nilpotent radical of a Borel 
subalgebra of a group of type $B_2$ and if $p=5$ (which is 
larger than $h$ but {\em not} larger than $2(h-1)$), then the 
isomorphism $\HHH^*(u(\fn), k) \cong S^{*}({\fn}^{*})^{(1)}\otimes 
\HHH^{*}(\fn,k)$ holds a modules 
over the symmetric algebra $S^{*}({\fn}^{*})^{(1)}$ (so that the 
cohomology ring is Cohen-Macaulay), but it does {\em not}
hold as rings. A search for the reason for this 
phenomenon led to the discovery of a one-dimensional central 
extension of $\fn$ (with trivial $p$th power) 
whose cohomology ring is easily proved to 
be not Cohen-Macaulay. Indeed, this illustrates 
a general situation. One of the theorems in the paper is that if 
$\HHH^*(u(\fn), k) \cong S^{*}({\fn}^{*})^{(1)}\otimes \HHH^{*}(\fn,k)$
holds as an isomorphism of rings, then when $\fn$ is 
replaced by a one-dimensional extension, there is the 
same isomorphism, though perhaps only as an isomorphism of 
$S^{*}({\fn}^{*})^{(1)}$-modules. In the paper, we present numerous examples of this 
phenomenon. 

Cohen-Macaulay rings are of general interest, in part, because they have
very nice structural properties. For example, it can 
be shown that for any restricted Lie algebra, if its cohomology ring is 
Cohen-Macaulay, then the cohomology ring admits a formal 
Poincar\'e duality and its Poincar\'e series, as a rational 
polynomial, satisfies a functional equation \cite{BC1, BC2}. In the
case of a $p$-nilpotent Lie algebra with vanishing $p$-power
operation, we show that the cohomology ring is Cohen-Macaulay 
if and only if the spectral sequence 
given above collapses at the $E_2$-page. 

The paper is organized as follows. In the following 
section of the paper, we present 
preliminaries about cohomology rings and the 
definitions from commutative algebra.
A proof that the cohomology ring is Cohen-Macaulay 
if and only if the spectral sequence collapses is given 
in Section 3. We also prove results which 
describe how the spectral sequence behaves under 
central extensions in that section. The next 
four sections are concerned with specific examples. In the case that if $\fn$ 
is the nilpotent radical of a Borel 
subalgebra of $\sl_3$ (i.e., type $A_2$), and the 
field has characteristic 3, then the isomorphism 
$\HHH^*(u(\fn), k) \cong
S^{*}({\fn})^{(1)}\otimes \HHH^{*}(\fn,k)$
holds as modules over the symmetric algebra, but not as ring. 
Similar examples are given in type $B_2$ in characteristic 5 and in 
type $G_2$ in characteristic 7. In each example, there is a one-dimensional
extension of the Lie algebra whose cohomology ring is not Cohen-Macaulay. 
Examples of Lie algebra whose cohomology rings are not Cohen-Macaulay
are given in all characteristics. In Section 8, we 
outline which Lie algebra of dimension 5 have 
cohomology rings that are Cohen-Macaulay. In Section 9, we look at the 
special case of the nilpotent radical of the Borel 
subalgebra of $\mathfrak{sl}_4$ when $h<p<2(h-1)$ and 
show that the cohomology ring can be identified the 
symmetric algebra tensored with the ordinary Lie algebra cohomology as rings. 

\section{Preliminaries} 

Let $({\mathfrak g},[p])$ be a restricted Lie algebra 
over an algebraically closed field of characteristic 
$p>0$. Throughout this paper we will work with 
the added assumption that $p\geq 3$. 
The restricted representations for ${\mathfrak g}$ 
correspond to modules for the restricted enveloping 
algebra $u({\mathfrak g})$. Since $u({\mathfrak g})$ 
is a finite-dimensional cocommutative Hopf 
algebra the cohomology ring 
$\text{H}^{*}(u({\mathfrak g}),k)$ is a finitely 
generated graded-commutative $k$-algebra. 
For these types of rings, notions like 
Krull dimension and spectrum are well defined. 
The spectrum of the cohomology ring 
${\mathcal V}_{\mathfrak g}$ is homeomorphic to 
the restricted nullcone: 
\[
{\mathcal N}_{1}({\mathfrak g}):=
\{x\in {\mathfrak g}:\ x^{[p]}=0\}.
\]

\vskip.1in
When $p\geq 3$ there exists a the spectral
sequence 
\begin{equation}\label{specseq1}
E_2^{2i,j} \ \ = \ \ S^i({\mathfrak g}^*)^{(1)} 
\otimes \HHH^i({\mathfrak g},k) \Rightarrow
\HHH^{2i+j}(u({\mathfrak g}), k) 
\end{equation}
where $\text{H}^{*}({\mathfrak g},k)$ is the 
ordinary Lie algebra cohomology. 
In particular, the map of the universal 
enveloping algebra of $\fg$ to the restricted 
enveloping algebra of $\fg$ induces an edge 
homomorphism from the restricted cohomology 
to the ordinary Lie 
algebra cohomology. On the other hand, 
there is another edge 
homomorphism $\Phi: S^{*}({\fg}^{*})^{(1)}\rightarrow 
\HHH^{2*}(u({\fg}),k)$ which induces 
an inclusion of 
$R=k[{\mathcal N}_{1}({\fg})]\hookrightarrow 
\HHH^{*}(u({\fg}),k)$. Furthermore,  
$\HHH^{*}(u({\fg}),k)$ is an integral 
extension of $R$. 

The following observation is a consequence of these facts in the 
event that the $p$-power operation vanishes on a 
nilpotent restricted $p$-Lie algebra $\fn$. In this situation, $\CN_1(\fn)
= \fn$ and its coordinate ring is $S^*(\fn^*)^{(1)}$.  Indeed, any 
restricted Lie algebra with vanishing $p$-power operation is nilpotent. 

\begin{prop} \label{prop:symmetric}
Suppose that $\fn$ is a restricted  Lie algebra such 
that $x^{[p]} = 0$ for all $x \in \fn$. Then the edge 
homomorphism $S^*(\fn^*)^{(1)} \to \HHH^{2*}(u(\fn),k)$
is an injection. 
\end{prop}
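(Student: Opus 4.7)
The plan is to use the factorization of $\Phi$ through the coordinate ring of the restricted nullcone (as recalled in the preliminaries above) and then invoke the hypothesis that this nullcone is all of $\fn$.

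First, I will write
\[
\Phi \;=\; \beta \circ \alpha \colon\; S^*(\fn^*)^{(1)} \xrightarrow{\;\alpha\;} R = k[\CN_1(\fn)] \xrightarrow{\;\beta\;} \HHH^{2*}(u(\fn),k),
\]
where $\beta$ is the inclusion exhibiting $\HHH^*(u(\fn),k)$ as an integral extension of $R$, and $\alpha$ is the canonical ring surjection from the polynomial ring $S^*(\fn^*)^{(1)} = k[\fn^{(1)}]$ onto the coordinate ring of the closed subvariety $\CN_1(\fn) \subseteq \fn^{(1)}$. By construction the kernel of $\alpha$ is the ideal of functions on $\fn^{(1)}$ that vanish on $\CN_1(\fn)$.

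Next I will invoke the hypothesis. Since $x^{[p]} = 0$ for every $x \in \fn$, we have $\CN_1(\fn) = \fn$ as closed subvarieties, so the defining ideal is zero and $\alpha$ is a ring isomorphism. Composing with the injection $\beta$ yields the desired injectivity of $\Phi$.

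The only subtle point worth double-checking is the Frobenius-twist bookkeeping: one must verify that $\alpha$ really is the canonical quotient associated to the Frobenius-twisted affine space $\fn^{(1)}$, rather than to $\fn$ itself. Since $k$ is perfect of characteristic $p$, this identification is routine, and I do not foresee any substantive obstacle to the argument.
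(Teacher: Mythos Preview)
Your argument is correct and is precisely the reasoning the paper intends: the proposition is stated as an immediate consequence of the facts recalled just before it, namely that the edge homomorphism $\Phi$ factors through $R = k[\CN_1(\fn)] \hookrightarrow \HHH^*(u(\fn),k)$, and under the hypothesis $\CN_1(\fn) = \fn$ so that $S^*(\fn^*)^{(1)} \to R$ is an isomorphism. There is nothing to add.
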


We require several ring theoretic notions which, though usually 
defined for commutative rings or commutative local rings, 
apply also to graded-commutative $k$-algebras. 

Suppose that 
$A = \sum_{i \geq 0} A_i$ is a graded-commutative $k$-algebra and that 
$M = \sum_{i \geq 0} M_i$ is a graded $A$-module. A sequence
$x_1, \dots, x_r$ of homogeneous elements of $A$ is said to be 
a {\it regular sequence} for $M$ if for every $i = 1, \dots, r$, we
have that multiplication by $x_i$ is an injective map from 
$M/(x_1, \dots, x_{i-1})M$ to itself. The {\it depth} of $M$ is 
the length of the longest regular sequence for $M$, and the 
depth of $A$ is the depth of $A$ as a module over itself. 

A sequence of homogeneous elements $x_1, \dots, x_r$ is a homogeneous 
set of parameters for $A$, if $x_1, \dots, x_r$
generate a polynomial subring $R = k[x_1, \dots, x_r] \subseteq A$ 
and that $A$ is finitely generated as a module over $R$. In this case, 
the number $r$ must be the Krull dimension of $A$. The module 
$M$ is {\it Cohen-Macaulay} if its depth is equal to the Krull 
dimension of $A$. The algebra $A$ is Cohen-Macaulay if it is 
Cohen-Macaulay as a module over itself. This is equivalent to
the condition that there be a homogeneous set of parameters 
$x_1, \dots, x_r$ for $A$ such that $A$ is a finitely generated
free module over the polynomial subring $k[x_1, \dots, x_r]$.  
It is a theorem that if there is a homogeneous set of parameters
$x_1, \dots, x_r$ such that $A$ is a finitely generated free
module over $k[x_1, \dots, x_r]$, then $A$ is a finitely generated
free module over $k[y_1, \dots, y_r]$ for any homogeneous
set of parameters $y_1, \dots, y_r$. For a reference, 
see Proposition 3.1 of \cite{St} or Theorem 2, page IV-20 of 
\cite{Se}. Note that in the book of Serre, the proof is given
only for commutative local rings, but can be easily adapted to 
the graded-commutative case. 

With these preliminaries, we can extract the results 
that we need for this paper. 

\begin{thm}\label{thm:CM-prelim}
Suppose that $\fn$ is a $p$-nilpotent Lie algebra with trivial 
$p$th-power operation (i.e., $x^{[p]} = 0$ for all $x \in \fn$). 
Then the cohomology ring $\HHH^*(u(\fn),k)$
is Cohen-Macaulay if and only if it is a free module over 
the polynomial subring $S^{*}(\fn^*)^{(1)}$. In particular, 
no nonzero element of $S^{*}(\fn^*)^{(1)}$ can be a divisor of 
zero if the cohomology ring is Cohen-Macaulay. 
\end{thm}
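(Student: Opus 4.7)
The plan is to reduce the statement directly to the cited fact that Cohen--Macaulayness is equivalent to freeness over \emph{any} homogeneous set of parameters, so the whole task is to identify the image of $S^*(\fn^*)^{(1)}$ as such a set of parameters.

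First I would fix a basis $y_1,\dots,y_n$ of $\fn^*$ and consider the images $x_i := \Phi(y_i) \in \HHH^2(u(\fn),k)$ under the edge homomorphism of Proposition \ref{prop:symmetric}. By that proposition the $x_i$ generate a polynomial subring $R \cong S^*(\fn^*)^{(1)}$ of $\HHH^*(u(\fn),k)$. Since $x^{[p]}=0$ for all $x \in \fn$, the restricted nullcone $\CN_1(\fn)$ is all of $\fn$, so the Krull dimension of $\HHH^*(u(\fn),k)$ equals $\dim \fn = n$. Combined with the fact (recalled in the preliminaries) that $\HHH^*(u(\fn),k)$ is integral, hence finitely generated, over $R$, this shows that $x_1,\dots,x_n$ is a homogeneous set of parameters for $A := \HHH^*(u(\fn),k)$.

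At this point I would apply the theorem cited from \cite{St} (Proposition 3.1) or \cite{Se} (Theorem 2, p.~IV-20): a finitely generated graded-commutative $k$-algebra with a homogeneous set of parameters $x_1,\dots,x_n$ is Cohen--Macaulay if and only if it is a free module over the polynomial ring $k[x_1,\dots,x_n]$. Applied to the set of parameters constructed above, this says precisely that $A$ is Cohen--Macaulay if and only if $A$ is a free $S^*(\fn^*)^{(1)}$-module, which is the main assertion.

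For the final \emph{in particular} clause, suppose $A$ is Cohen--Macaulay, so by the previous step $A$ is a free $R$-module, say $A \cong \bigoplus_\alpha R\,e_\alpha$ as an $R$-module. If $r \in R$ is nonzero and $a = \sum_\alpha r_\alpha e_\alpha \in A$ satisfies $ra=0$, then $r r_\alpha = 0$ in $R$ for every $\alpha$; but $R = S^*(\fn^*)^{(1)}$ is an integral domain, so each $r_\alpha = 0$ and hence $a=0$. Thus $r$ is not a zero divisor.

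I expect no real obstacle here; the whole statement is essentially a packaging of standard commutative algebra once the identification of $R$ as a homogeneous set of parameters is in hand. The only mildly delicate point is invoking the cited freeness theorem in the graded-commutative (as opposed to strictly commutative) setting, which is the reason the authors explicitly note in the preliminaries that Serre's argument adapts to this case.
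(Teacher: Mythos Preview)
Your proposal is correct and follows essentially the same approach as the paper: identify a basis of $\fn^*$ as a homogeneous set of parameters via Proposition~\ref{prop:symmetric}, then invoke the cited result from \cite{St,Se} that Cohen--Macaulayness is equivalent to freeness over any such parameter system. The only cosmetic difference is that the paper justifies finite generation over $S^*(\fn^*)^{(1)}$ by noting that the ordinary Lie algebra cohomology $\HHH^*(\fn,k)$ is finite-dimensional (via the spectral sequence), whereas you invoke the integrality statement from the preliminaries; both are valid and amount to the same thing.
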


\begin{proof} The last statement clearly follows from the first part of the 
theorem. Suppose that $x_1, \dots, x_r$ is a basis for $\fn^*$.
By Proposition~\ref{prop:symmetric}, 
$$S^{*}(\fn^*)^{(1)}= k[x_1, \dots, x_r] \subseteq \HHH^*(u(\fn),k).$$
Because the ordinary Lie algebra cohomology $\HHH^*(\fn,k)$
is finite dimensional, we have that $x_1, \dots, x_r$
is a homogeneous set of parameters for $\HHH^*(u(\fn),k)$. 

If $\HHH^*(u(\fn),k)$ is a finitely generated 
free module over $S(\fn^*)^{(1)}$, then 
it is Cohen-Macaulay. On the other hand, if $\HHH^*(u(\fn),k)$
is Cohen-Macaulay, then it must be free as a 
module over $S(\fn^*)^{(1)}$, by the results 
cited above. 
\end{proof}

\section{Consequences of the Lie cohomology spectral sequence}

Let $\fn$ be a restricted Lie algebra with trivial $p$-restriction. 
We consider the first quadrant spectral sequence given as 
\begin{equation} \label{eq:specseq}
E_2^{2i,j} \ \ = \ \ S^i(\fn^*)^{(1)} 
\otimes \HHH^i(\fn,k) \Rightarrow
\HHH^{2i+j}(u(\fn), j) 
\end{equation}
One of important fact to note about the spectral sequence
is that $E_2^{i,j} = \{0\}$ if $j > \Dim(\fn)$. This is simply because
the ordinary Lie algebra cohomology of $\fn$ has the property that 
$\HHH^j(\fn, k) = \{0\}$ for $j > \Dim(\fn)$. 

This spectral sequence can be used to show that 
the cohomology of $u({\mathfrak n})$ is Cohen-Macaulay. 

\begin{prop}\label{prop:specseq}
If the spectral sequence $E_*^{*,*}$ collapses at the $E_2$ page,
then $\HHH^*(u(\fn),k)$ is free as module over the polynomial 
subalgebra $S= S^{*}(\fn^*)^{(1)}.$  In particular, it is 
Cohen-Macaulay. 
\end{prop}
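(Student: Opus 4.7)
The plan is to promote the freeness of the $E_\infty$-page (which holds under the collapse hypothesis) to freeness of the abutment $\HHH^*(u(\fn),k)$ over $S = S^*(\fn^*)^{(1)}$, using a standard lift-and-compare argument through the filtration. Freeness then implies Cohen-Macaulayness by Theorem~\ref{thm:CM-prelim}.

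First I would set up notation. Let $F^\bullet \HHH^*(u(\fn),k)$ denote the decreasing filtration on the abutment for which $F^p/F^{p+1} \cong \bigoplus_{i+j = *,\, 2i = p} E_\infty^{2i,j}$. Under the collapse hypothesis,
\[
E_\infty^{*,*} \;=\; E_2^{*,*} \;=\; S \otimes_k \HHH^*(\fn,k),
\]
and since $\fn$ is finite dimensional, $\HHH^*(\fn,k)$ is a finite dimensional graded $k$-vector space. Consequently $E_\infty^{*,*}$ is a free $S$-module of finite rank, with an $S$-basis obtained by lifting any homogeneous $k$-basis $\bar y_1,\dots,\bar y_m$ of $\HHH^*(\fn,k) = E_\infty^{0,*}$. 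Moreover, by Proposition~\ref{prop:symmetric} the edge map places $S$ inside $\HHH^*(u(\fn),k)$ as a genuine graded subring sitting in the bottom row, and multiplication by elements of $S$ raises the filtration degree accordingly.

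Next I would lift each $\bar y_j$ to a homogeneous element $y_j \in \HHH^*(u(\fn),k)$ of the same total degree and prove that $y_1,\dots,y_m$ is a free $S$-basis of $\HHH^*(u(\fn),k)$ by two complementary arguments. For spanning, fix a total degree $n$ and proceed by downward induction on the filtration: given $z \in F^p\HHH^n(u(\fn),k)$, its class in $F^p/F^{p+1} \cong E_\infty^{p,n-p}$ lies in the $S$-span of the $\bar y_j$, so after subtracting a suitable $S$-linear combination of the $y_j$ we land in $F^{p+1}$. Because $E_2^{p,q} = 0$ for $q > \Dim(\fn)$, the filtration on $\HHH^n(u(\fn),k)$ has finite length, and the induction terminates. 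For freeness, suppose $\sum_j s_j y_j = 0$ with homogeneous $s_j \in S$ not all zero; taking the smallest filtration degree in which some $s_j y_j$ has nonzero associated-graded component gives a nontrivial $S$-linear relation among the $\bar y_j$ in $E_\infty^{*,*} = S \otimes \HHH^*(\fn,k)$, contradicting freeness of the $E_\infty$-page over $S$.

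The main obstacle is the bigraded bookkeeping: one must be careful that multiplication by $S \subseteq \HHH^{2*}(u(\fn),k)$ corresponds, under passage to the associated graded, to multiplication by $S = E_\infty^{2*,0}$ on $E_\infty^{*,*}$, and that the $y_j$ can be chosen in filtration $0$ so that their leading terms really do give the chosen basis of $E_\infty^{0,*}$. Once this is in place, freeness over $S$ follows immediately, and Theorem~\ref{thm:CM-prelim} then yields that $\HHH^*(u(\fn),k)$ is Cohen-Macaulay.
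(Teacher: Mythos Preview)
Your argument is correct and follows essentially the same approach as the paper: both promote freeness of the associated graded $E_\infty = S \otimes \HHH^*(\fn,k)$ to freeness of the abutment via the finite filtration. The paper organizes this slightly differently, using the increasing row filtration $U_m = \sum_{j \le m} E_\infty^{*,j}$ and the fact that each successive quotient $U_m/U_{m-1}$ is a free $S$-module (hence splits off), whereas you use the standard decreasing column filtration and an explicit basis-lifting argument; these are interchangeable implementations of the same idea.
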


\begin{proof}
The spectral sequence is a filtered version of the cohomology 
$\HHH^*(u({\mathfrak n}),k)$. That is, if $\zeta \in E_2^{i,j}$ and $\eta
\in E_2^{r,s}$ then $\zeta\eta \in \sum_{\ell \geq 0} E_2^{i+r+\ell,
j+s-\ell}$. For this reason, for any $m$ the collection of 
the lowest $m$ rows, ($U_m = \sum E_2^{i,j}$ with $i \geq 0$ and   
$0 \leq j \leq m$) is a module over $S$, which lies in the 
bottom row. Because the spectral sequence collapses, 
$E_2 = E_{\infty}$ and the 
quotients $U_m/U_{m-1}$ are free $S$-modules.  Hence, 
the proposition follows from the fact that every one of the 
quotient maps $U_m \to U_m/U_{m-1}$ must split as a map of 
$S$-modules. 
\end{proof}

Many of the results of 
\cite{BC1} apply in the case that the polynomial ring is 
Cohen-Macaulay. In particular, we have the following 
adaptation of \cite[Theorem 1.1]{BC1}. We refer the reader
to that paper for the proof which carries over from group 
cohomology to restricted Lie algebra cohomology with only
minimal changes.

\begin{thm} \label{thm:bc}
Suppose that $\fn$ is a restricted $p$-Lie algebra with trivial $p$th-power
operation. Let $d$ denote the dimension of $\fu$. 
If the cohomology ring $\HHH^*(u(\fn), k)$
is Cohen-Macaulay, then any basis for $\fn^*$, meaning
any complete linearly independent set of 
degree-two generators $X_1, \dots, X_d$ for the symmetric algebra
$S^{*}(\fn^*)^{(1)}$, is a homogeneous set of parameters for 
$\HHH^*(u(\fn),k)$ and the quotient 
\[
\HHH^*(u(\fn),k)/(X_1, \dots, X_d)
\]
satisfies Poincar\'e duality in formal dimension $d$. Moreover, 
the Poincar\'e series $P_k(t) = \sum_{i \geq 0} 
\Dim \HHH^i(u(\fu),k)\ t^{i}$, regarded as a rational function of $t$
satisfies the functional equation
\[
P_k(1/t) = (-t)^d P_k(t).
\]
\end{thm}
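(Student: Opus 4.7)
The plan is to adapt the proof of \cite[Theorem 1.1]{BC1} from the setting of finite-group cohomology to the setting of cohomology of the restricted enveloping algebra of $\fn$, exploiting the fact that $u(\fn)$ is itself a finite-dimensional cocommutative Hopf algebra and hence a Frobenius algebra. There are three pieces to establish: the homogeneous set of parameters claim, the Poincar\'e duality of the finite-dimensional quotient, and the functional equation for the Poincar\'e series.

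First, I would verify the homogeneous set of parameters claim. By Proposition~\ref{prop:symmetric}, the edge homomorphism $S^*(\fn^*)^{(1)} \hookrightarrow \HHH^*(u(\fn),k)$ is injective, and by the discussion in Section~2, the cohomology ring is an integral extension of $R = S^*(\fn^*)^{(1)}$, hence module-finite over it. Any basis $X_1,\ldots,X_d$ of $\fn^*$ consists of algebraically independent degree-two generators of $S^*(\fn^*)^{(1)}$, so $k[X_1,\ldots,X_d] = S^*(\fn^*)^{(1)}$ and $X_1,\ldots,X_d$ is a homogeneous set of parameters in the sense of Section~2. Under the Cohen-Macaulay hypothesis, Theorem~\ref{thm:CM-prelim} then gives that $\HHH^*(u(\fn),k)$ is free over this polynomial subring, so $X_1,\ldots,X_d$ is a regular sequence and the quotient $Q := \HHH^*(u(\fn),k)/(X_1,\ldots,X_d)$ is a finite-dimensional graded $k$-algebra.

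The main technical step is to establish that $Q$ satisfies Poincar\'e duality in formal dimension $d$. Here I would follow the approach of Benson-Carlson: use the Frobenius algebra structure on $u(\fn)$ to produce a non-degenerate duality pairing on Tate cohomology, then invoke the standard isomorphism between negative Tate cohomology and the local cohomology of $\HHH^*(u(\fn),k)$ at the maximal irrelevant ideal to transport this pairing into ordinary cohomology modulo the regular sequence $X_1,\ldots,X_d$. The Cohen-Macaulay assumption is what makes this local cohomology concentrated in a single degree, so that the induced pairing descends to a non-degenerate pairing $Q^i \otimes Q^{d-i} \to Q^d \cong k$. I expect this to be the main obstacle: the formal structure of the argument is the same as in \cite{BC1}, but the group-algebra duality machinery must be replaced by the analogous Frobenius/Tate duality valid for finite-dimensional cocommutative Hopf algebras, which is the only substantive modification the author alludes to when he says the proof "carries over \dots with only minimal changes."

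Once these structural results are in place, the functional equation for $P_k(t)$ follows formally. Writing $Q(t)$ for the Poincar\'e series of $Q$, the freeness of $\HHH^*(u(\fn),k)$ over $k[X_1,\ldots,X_d]$ (with each $X_i$ of degree two) gives
\[
P_k(t) \;=\; \frac{Q(t)}{(1-t^2)^d},
\]
while the Poincar\'e duality of $Q$ in formal dimension $d$ yields $Q(1/t) = t^{-d}Q(t)$. Combining these with the elementary identity $(1-t^{-2})^d = (-1)^d t^{-2d}(1-t^2)^d$ produces $P_k(1/t) = (-t)^d P_k(t)$ by a direct substitution, completing the proof.
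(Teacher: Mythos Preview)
Your proposal is correct and matches the paper's approach exactly: the paper gives no self-contained proof here but simply states that the argument of \cite[Theorem~1.1]{BC1} ``carries over from group cohomology to restricted Lie algebra cohomology with only minimal changes,'' the minimal change being precisely the replacement of the group-algebra Frobenius/Tate duality by the analogous structure on the finite-dimensional cocommutative Hopf algebra $u(\fn)$. Your sketch in fact supplies more detail than the paper itself does.
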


A consequence of the preceding result is the following. 

\begin{cor}\label{cor:bc}
Suppose that $\fn$ is a restricted $p$-Lie algebra of dimension $d$
whose $p$-power operation is trivial. Then the 
spectral sequence (\ref{eq:specseq})  collapses at the $E_2$ page if and only if 
the cohomology ring $\HHH^*(u(\fn),k)$ is Cohen-Macaulay.
In this case, the edge homomorphism $\HHH^*(u(\fn),k) \to \HHH^*(\fn,k)$ 
is surjective and the Poincar\'e series, as a rational 
function has the form 
\[
P_k(t) = \frac{f_k(t)}{(1-t^2)^d},
\]
where $f_k(t)$ is the Poincar\'e polynomial for the ordinary
Lie algebra cohomology $\HHH^*(\fn,k)$. 
\end{cor}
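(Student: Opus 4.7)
The plan is to handle the equivalence one direction at a time, with the remaining two conclusions being automatic consequences of collapse. The ``if'' direction is immediate from Proposition~\ref{prop:specseq}, so all of the content lies in the converse.

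Assume $\HHH^*(u(\fn),k)$ is Cohen-Macaulay. By Theorem~\ref{thm:CM-prelim} it is a finite free graded module over $S = S^*(\fn^*)^{(1)}$, so $P_k(t) = g(t)/(1-t^2)^d$ for some polynomial $g(t)$ with non-negative integer coefficients. The Poincar\'e polynomial of $E_2$ in total degree is $f_k(t)/(1-t^2)^d$, and since $E_\infty$ is a subquotient of $E_2$ its Poincar\'e polynomial is bounded above coefficient-wise by that of $E_2$. It therefore suffices to prove $g = f_k$; once this holds, $\mathrm{Poincar\'e}(E_\infty) = \mathrm{Poincar\'e}(E_2)$ forces the spectral sequence to collapse.

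The key bookkeeping is that a nonzero differential $d_r$ at bidegree $(p,q)$ removes exactly $\mathrm{rank}(d_r)(t^{p+q} + t^{p+q+1})$ from the Poincar\'e polynomial as one passes from $E_r$ to $E_{r+1}$. Summing over all pages yields
\[
\frac{f_k(t)}{(1-t^2)^d} - P_k(t) = (1+t)\, A(t)
\]
for some formal power series $A(t)$ with non-negative coefficients. Substituting $P_k = g/(1-t^2)^d$ gives $f_k(t) - g(t) = (1-t^2)^d (1+t)\, A(t)$. Theorem~\ref{thm:bc} applied to the functional equation $P_k(1/t) = (-t)^d P_k(t)$ forces $g$ to be palindromic of degree $d$ with $g_0 = g_d = 1$, and the unimodularity of the nilpotent Lie algebra $\fn$ gives ordinary Poincar\'e duality on $\HHH^*(\fn,k)$, so $f_k$ is palindromic of degree $d$ with $f_{k,0} = f_{k,d} = 1$. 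Consequently $f_k - g$ is a palindromic polynomial of degree at most $d-1$.

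The main obstacle is then extracting $f_k = g$ from this configuration. The palindromicity of $f_k - g$ induces the functional equation $A(1/t) = (-1)^d t^{d+1} A(t)$ on the rational function $A$; combining this with the non-negativity $A \geq 0$, the bounded degree of $f_k - g$, and the $S$-linearity of the differentials (which tie the rank of $d_r$ at $(2i,j)$ to its rank at $(0,j)$ through the $S$-action), one argues by a partial-fractions / valuation analysis at the two poles $t = \pm 1$ that $A \equiv 0$. Granted $f_k = g$, the spectral sequence collapses, and the remaining claims follow immediately: $E_\infty = E_2 = S \otimes \HHH^*(\fn,k)$, the edge homomorphism $\HHH^*(u(\fn),k) \to \HHH^*(\fn,k)$ is the natural surjection onto $E_\infty^{0,*} = E_2^{0,*}$, and $P_k(t) = f_k(t)/(1-t^2)^d$.
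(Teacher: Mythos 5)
The ``if'' direction, the bookkeeping identity $f_k(t)/(1-t^2)^d - P_k(t) = (1+t)A(t)$ with $A$ having non-negative coefficients, and the palindromicity of both $g$ and $f_k$ are all correct and correctly derived. But the crux of the converse --- the deduction $A\equiv 0$ --- is only asserted (``one argues by a partial-fractions / valuation analysis\dots''), and the constraints you have actually established are provably insufficient to force it. Concretely, take $d=3$ and $h = f_k - g = t+t^2$: this is palindromic of degree $\le d-1$ with $h_0=h_d=0$, and
\[
A(t) \;=\; \frac{h(t)}{(1-t^2)^d(1+t)} \;=\; \frac{t}{(1-t^2)^3} \;=\; \sum_{k\ge 0}\binom{k+2}{2}\,t^{2k+1}
\]
has non-negative coefficients. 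So non-negativity of $A$, the degree bound, and the two Poincar\'e dualities are all compatible with $A\neq 0$; no valuation analysis at $t=\pm 1$ can rule this configuration out. The appeal to ``$S$-linearity of the differentials'' is the right instinct --- some structural input beyond Poincar\'e series is genuinely required --- but you do not say how it enters, and for $r>2$ the pages $E_r^{*,j}$ need not be free over $S$, so tying the rank of $d_r$ at $(2i,j)$ to its rank at $(0,j)$ is not automatic.

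The paper closes this gap by a different, module-theoretic induction rather than a generating-function argument. Writing $S=S^*(\fn^*)^{(1)}$ and using Theorem~\ref{thm:CM-prelim}/Theorem~\ref{thm:bc} to get that $\HHH^*(u(\fn),k)$ is \emph{free} over $S$ on homogeneous generators, one sets $M_t = S\cdot\sum_{j\le t}\HHH^j(u(\fn),k)$ and proves by induction on $t$ that no differential has nonzero image in rows $0,\dots,t$ and that $M_t/IM_t \cong \sum_{i\le t}E_2^{0,i}$. The inductive step compares the free $S$-module $M_{t+1}$, generated by classes representing $\sum_{i\le t+1}E_\infty^{0,i}$, with the free module $\sum_{i\le t+1}E_2^{*,i}$; since a free module cannot be a proper quotient of a free module on the same number of generators, the rows must be undisturbed. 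If you want to salvage your approach, you would need to import exactly this freeness row by row --- at which point you are essentially reproducing the paper's induction.
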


\begin{proof}
For convenience of notation let $S = S^{*}(\fn^*)^{(1)}$ and let 
$\HHH^* = \HHH^*(u(\fn).k)$. By Proposition \ref{prop:specseq},
if the spectral sequence (\ref{eq:specseq}) collapses at the $E_2$
page then $\HHH^*$ is Cohen-Macaulay. We need to prove the 
converse. So assume that $\HHH^*$ is Cohen-Macaulay. For $t \geq 0$,
let $M_t = S \cdot \sum_{j=0}^t \HHH^j$, the $S$-submodule of 
$\HHH^*$ generated by elements of degree at most $t$. By Theorem
\ref{thm:bc}, this is a free $S$-module. That is, let $I = (X_1, 
\dots, X_d)$ (in the notation of the theorem). Then $\HHH^*$ 
is a free $S$-module on a set of homogeneous elements 
$\zeta_1, \dots, \zeta_m$ whose classes form a basis of $\HHH^*/I$.
If $\zeta_1, \dots, \zeta_\ell$ are all of those element of 
degree at most $t$, then it is easily seen that $M_t$ is a 
free module on these elements. 

Now we proceed by induction on $t$ to prove that 
\[
M_t/I \cdot M_t \ \cong \ \sum_{i = 0}^t E_2^{0,i},
\]
as vector spaces, and that no differential $d_r$ on the 
$r^{th}$ page of the spectral 
sequence has a nonzero image on any of the lines $E_r^{*,0}, \dots,
E_r^{*,t}$.  This is true if $t = 0$, by Proposition \ref{prop:symmetric}.
That is, the differential $d_r$ on the $r^{th}$-page cannot have a 
nonzero image $d_r:E_r^{j,r-1} \to E_r^{j+r,0}$ as otherwise the 
edge homomorphism onto the bottom row would not be injective. 

So assume that the statement is true for a certain value of $t$.
Then the differential $d_r$ on the $r^{th}$ page of the spectral 
sequence must vanish on $E_r^{*,t+1}$, as otherwise there would be 
a nonzero image on one of the lower lines. Therefore, $E_2^{0,j} 
= E_\infty^{0,j}$ for all $j$ with $0 \leq j \leq t+1$. As a consequence,
$M_{t+1}$ is generated as an $S$-module by elements representing a 
$k$-basis for $\sum_{i=0}^{t+1} E_\infty^{0,i} \cong \sum_{i=0}^{t+1}
E_2^{0,i}$. That is, $M_{t+1} \cong \sum_{i=1}^{t+1} E_\infty^{*,i}$.
Because, this is a free $S$-module, it must be that 
\[
\sum_{i=1}^{t+1} E_\infty^{*,i} \ \cong \ \sum_{i=1}^{t+1} E_2^{*,i}
\]
since these are both free modules on the same number of generators. 
It follows that no differential of the spectral sequence can have a
nonzero image on row $t+1$. The induction proves the corollary. 
\end{proof}

We record the following lemma which will be later 
useful in comparing spectral sequences. 

\begin{lemma} \label{lem:eqdim}
Suppose that $\Dim \HHH^m(u(\fn),k) = \sum_{2i+j = m} 
\Dim(S^{2i}(\fn^*) \otimes \HHH^j(\fn,k))$ for 
all $m \geq 0$.  Then the spectral sequence (\ref{eq:specseq})
collapses at the $E_2$ page and the cohomology ring $\HHH^*(u(\fn),k)$
is Cohen-Macaulay.  
\end{lemma}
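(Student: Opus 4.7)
The plan is a dimension-counting argument, reducing everything to the standard inequality for the $E_\infty$ page of a first-quadrant spectral sequence.

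First I would record the two inputs. Input one: the spectral sequence (\ref{eq:specseq}) converges to $\HHH^{*}(u(\fn),k)$, so for each total degree $m$ the graded pieces $E_\infty^{p,q}$ with $p+q=m$ assemble into a filtration of $\HHH^m(u(\fn),k)$, giving
\[
\sum_{p+q=m} \Dim E_\infty^{p,q} \ =\ \Dim \HHH^m(u(\fn),k).
\]
Input two: each successive page is obtained by taking homology of a differential on the previous page, and taking homology of a differential on a finite-dimensional graded vector space can only decrease the total dimension on each diagonal. Therefore for every $r\geq 2$,
\[
\sum_{p+q=m} \Dim E_{r+1}^{p,q} \ \leq\ \sum_{p+q=m} \Dim E_r^{p,q},
\]
with equality in a given total degree $m$ if and only if every differential $d_r$ whose source or target lies on the diagonal $p+q=m$ vanishes.

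Next I would chain these together with the hypothesis. Combining the two inputs gives
\[
\Dim \HHH^m(u(\fn),k) \ =\ \sum_{p+q=m}\Dim E_\infty^{p,q} \ \leq\ \sum_{p+q=m}\Dim E_2^{p,q} \ =\ \sum_{2i+j=m}\Dim\!\bigl(S^{2i}(\fn^*)\otimes \HHH^j(\fn,k)\bigr),
\]
and the assumed equality of the two ends forces equality throughout. Hence the inequality on each intermediate page is also an equality in every total degree, which means every differential $d_r$ on every page is zero. Thus the spectral sequence collapses at $E_2$.

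Finally, once collapse at $E_2$ is established, Proposition \ref{prop:specseq} applies verbatim and yields that $\HHH^{*}(u(\fn),k)$ is a free module over $S = S^{*}(\fn^{*})^{(1)}$, and in particular Cohen-Macaulay. There is no real obstacle in this argument; the only thing to be careful about is the convergence bookkeeping, namely that the filtration on $\HHH^m(u(\fn),k)$ really does have associated graded with total dimension equal to $\sum_{p+q=m}\Dim E_\infty^{p,q}$. This is automatic here because the spectral sequence is first-quadrant, so on each diagonal only finitely many terms are nonzero and the filtration is finite.
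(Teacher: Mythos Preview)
Your argument is correct and is essentially the same dimension-counting idea as the paper's proof, just written out in more detail: the hypothesis forces $\sum_{p+q=m}\Dim E_\infty^{p,q}=\sum_{p+q=m}\Dim E_2^{p,q}$, so no differential can be nonzero, and then Proposition~\ref{prop:specseq} gives the Cohen-Macaulay conclusion.
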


\begin{proof}
The hypotheses of the lemma asserts that $\Dim \HHH^m(u({\mathfrak n}),k) =
\sum_{m = 2i+j} \Dim E_2^{2i+j}$. The condition forces the 
spectral sequence to collapse at the $E_2$ page, because 
otherwise there would be some further nontrivial differential
that would reduce the dimension.
\end{proof}

Nilpotent Lie algebras can be built up from central extensions. The 
next theorem provides conditions 
on when the spectral sequence will 
collapse at $E_{2}$ under a central extension and 
yield an isomorphism of $S^{*}({\mathfrak n}^{*})^{(1)}$-modules. 

\begin{thm}\label{thm:tensoralg}
Let $\fn$ be a nilpotent restricted $p$-Lie algebra with
trivial $p$-power operation. Assume that $\fz$ is a central ideal 
of dimension one. Suppose that we have
an isomorphism of rings
\[
\HHH^*(u(\fn/\fz), k) \cong S^{*}((\fn/\fz)^*)^{(1)} 
\otimes \HHH^*(\fn/\fz,k).
\]
Then 
\[
\HHH^*(u(\fn), k) \cong S^{*}(\fn^*)^{(1)} 
\otimes \HHH^*(\fn,k).
\]
as modules over $S^{*}(\fn^*)^{(1)}$. In particular,
$\HHH^*(u(\fn),k)$ is Cohen-Macaulay. 
\end{thm}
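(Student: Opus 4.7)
The plan is to extract the Poincar\'e series of $\HHH^*(u(\fn),k)$ from the Hochschild--Serre spectral sequence of the central extension $\fz\hookrightarrow\fn\twoheadrightarrow\fn/\fz$, match it with the dimension of the $E_2$-page of (\ref{eq:specseq}), and then invoke Lemma~\ref{lem:eqdim} and Proposition~\ref{prop:specseq} to obtain both the collapse of (\ref{eq:specseq}) at $E_2$ and the stated module isomorphism. Since $\fz$ is one-dimensional with trivial $p$-power, $u(\fz)\cong k[z]/(z^p)$ and $\HHH^*(u(\fz),k)\cong k[\xi]\otimes\Lambda[\eta]$ with $|\xi|=2$, $|\eta|=1$, while centrality trivializes the action of $u(\fn/\fz)$ on $\HHH^*(u(\fz),k)$. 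The spectral sequence therefore reads
\[ E_2^{i,j}=\HHH^i(u(\fn/\fz),k)\otimes\HHH^j(u(\fz),k)\Rightarrow\HHH^{i+j}(u(\fn),k), \]
and by hypothesis $\HHH^*(u(\fn/\fz),k)\cong \overline{S}\otimes\overline{H}$ as rings, where $\overline{S}=S^{*}((\fn/\fz)^*)^{(1)}$ and $\overline{H}=\HHH^*(\fn/\fz,k)$.

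The first key step is to compute the transgressions. I would argue $d_2(\xi)=0$ because $\xi$ is the image under the edge restriction of the Frobenius twist of any lift to $\fn^*$ of a dual basis vector of $\fz$, so $\xi$ survives as a permanent cycle. Next, $d_2(\eta)\in\HHH^2(u(\fn/\fz),k)$ is the restricted extension class; writing it as $\alpha+\beta$ under the ring splitting $\HHH^2=\overline{S}^{\,1}\oplus\overline{H}^{\,2}$, the component $\alpha\in\overline{S}^{\,1}$ records the $p$-restriction discrepancy of the extension and vanishes, because the $p$-power operations on $\fn$ and $\fn/\fz$ are both zero, so any lift of $\overline{x}\in\fn/\fz$ to $\fn$ has $[p]$-power zero and contributes nothing to the $p$-cocycle. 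Thus $d_2(\eta)=\beta\in\overline{H}^{\,2}$. Because $d_2$ is a derivation vanishing on $\overline{S}\otimes\overline{H}$, a direct calculation of the homology of $(E_2,d_2)$ gives
\[ E_3 = \bigl(\overline{S}\otimes\overline{H}/\beta\overline{H}\bigr)\otimes k[\xi]\;\oplus\;\bigl(\overline{S}\otimes\operatorname{Ann}_{\overline{H}}(\beta)\bigr)\otimes k[\xi]\cdot\eta. \]
For $r\geq 3$ each differential $d_r$ vanishes: bottom-row classes have no possible image, $\xi$ is a permanent cycle, any class in row $1$ has $d_r$-target $E_r^{*,2-r}=0$, and by Leibniz every element of $E_3$ factors as a product of such permanent cycles. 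Hence $E_\infty=E_3$.

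A direct count yields
\[ P(\HHH^*(u(\fn),k),t)=\frac{f_{\overline{H}/\beta\overline{H}}(t)+t\cdot f_{\operatorname{Ann}_{\overline{H}}(\beta)}(t)}{(1-t^2)^{\dim\fn}}, \]
and the numerator equals $f_\fn(t):=P(\HHH^*(\fn,k),t)$ by the ordinary-Lie-algebra Hochschild--Serre spectral sequence for $\fz\hookrightarrow\fn\twoheadrightarrow\fn/\fz$, which degenerates at $E_3$ since $\fz$ is one-dimensional. Therefore $\dim\HHH^m(u(\fn),k)=\sum_{2i+j=m}\dim\bigl(S^{2i}(\fn^*)\otimes\HHH^j(\fn,k)\bigr)$ for every $m$, so Lemma~\ref{lem:eqdim} gives collapse of (\ref{eq:specseq}) at $E_2$ and Proposition~\ref{prop:specseq} delivers the stated $S^{*}(\fn^*)^{(1)}$-module isomorphism together with Cohen--Macaulayness. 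The principal obstacle is the identification of $d_2(\eta)$: one needs the standard fact that the Hochschild--Serre transgression outputs the extension class, together with the more delicate observation that under the hypothesized ring splitting this class lies purely in the $\overline{H}$ summand, precisely because triviality of the $p$-power operation eliminates the $\overline{S}^{\,1}$ contribution to the restricted $2$-cocycle.
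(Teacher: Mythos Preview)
Your overall strategy matches the paper's: run the Hochschild--Serre spectral sequence for $\fz\hookrightarrow\fn\to\fn/\fz$, identify $d_2(\eta)$ with an element of $\overline{H}^{\,2}$, show $E_3=E_\infty$, compare Poincar\'e series with the ordinary Lie-algebra computation, and invoke Lemma~\ref{lem:eqdim}. Most of the steps are correct and essentially identical to the paper's.

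The gap is exactly where you flag it: the justification that $\alpha=0$. You assert that the $\overline{S}^{\,1}$-component of $d_2(\eta)$ ``records the $p$-restriction discrepancy'' and hence vanishes because both $p$-power operations are trivial. This implicitly assumes that the hypothesized ring splitting $\HHH^2(u(\fn/\fz),k)\cong\overline{S}^{\,1}\oplus\overline{H}^{\,2}$ agrees with the cocycle-level decomposition of restricted extension classes into $(\omega,\phi)$-pairs. But the hypothesis only hands you \emph{some} ring isomorphism; there is no a~priori reason its section $\overline{H}^{\,2}\hookrightarrow\HHH^2(u(\fn/\fz),k)$ should pick out exactly the classes with $\phi=0$. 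Without that identification, the claim that triviality of the $p$-power forces $\alpha=0$ is unsupported.

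The paper closes this gap by a purely algebraic argument that works for \emph{any} ring splitting. Write $d_2(\eta)=s+v$ with $s\in\overline{S}^{\,1}$ and $v\in\overline{H}^{\,2}$. Since $\overline{H}$ is finite-dimensional, $v$ is nilpotent, so $(s+v)^{p^r}=s^{p^r}+v^{p^r}=s^{p^r}$ for $r$ large. As $s+v$ dies on the $E_3$-page, so does $s^{p^r}$, and hence its inflation to $\HHH^*(u(\fn),k)$ is zero. But $s^{p^r}\in S^*((\fn/\fz)^*)^{(1)}\subset S^*(\fn^*)^{(1)}$, and the latter injects into $\HHH^*(u(\fn),k)$ by Proposition~\ref{prop:symmetric}; this forces $s=0$. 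Once this is in hand, your remaining steps coincide with the paper's.
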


\begin{proof}
We use the Lyndon-Hochschild-Serre (LHS) spectral sequence 
\[
E_2^{i,j} = \HHH^i(u(\fn/\fz), \HHH^j(u(\fz),k)) 
\Rightarrow  \HHH^{i+j}(u(\fn),k).
\]
Since ${\mathfrak z}$ is central,  $\fn$ acts trivially on $\fz$
and hence also trivially on its cohomology. Thus we have 
$E_{2}^{i,j}\cong \HHH^i(u(\fn/\fz),k)\otimes \HHH^j(u(\fz),k)$.  

Next note that $E_2^{0,1}$ has dimension one, and is spanned by an 
element $x$. Then $d_2(x) = s + v$, where $s$ is in 
$S^1((\fn/\fz)^*)^{(1)} \otimes 1$ and $v$ is in 
$1 \otimes \HHH^2(\fn/\fz,k)$ by the hypothesis. Also 
by the hypothesis, we have that $v^n = 0$ for $n$ sufficiently
large, since $\HHH^*(\fn/\fz,k)$ has finite dimension. 
Consequently, for $r$ sufficiently large we have that
$0 = (s+v)^{p^r} = s^{p^r} + v^{p^r}= s^{p^r}$ on the 
$E_3$ page of the spectral sequence. However, if $s$ is
not zero, then we have a contradiction to the fact that
$S^{*}(\fn^*)^{(1)}$ injects into the cohomology ring
$\HHH^{2*}(u(\fn),k)$. So $d_2(x) \in 1 \otimes 
\HHH^2(\fn/\fz,k)$.

Next we see that the rows $E_2^{*,0}$ and $E_2^{*,1}$ are 
both isomorphic to $\HHH^*(u(\fn/\fz), k)$ as modules
over the symmetric algebra $S^{*}((\fn/\fz)^*)^{(1)}$. Moreover, 
the differential $d_2:  E_2^{*,1} \to E_2^{*+2,0}$
is a homomorphism of $S^{*}((\fn/\fz)^*)^{(1)}$-modules. More 
specifically, we have that the differential 
\[
d_2 : S^{*}(\fn^*)^{(1)} \otimes \HHH^{*}(\fn,k) \cong E_2^{*,1} 
\longrightarrow E_2^{*,0} \cong S^{*}(\fn^*)^{(1)} \otimes \HHH^{*}(\fn,k)
\]
is multiplication by $d_2(x)$ which has the form 
$d_2(x) = 1 \otimes w \in 1 \otimes 
\HHH^2(\fn/\fz,k)$. Hence, by the hypothesis, we have
that $E_3^{*,1} \cong S(\fn^*)^{(1)} \otimes K$ and
$E_3^{*,1} \cong S(\fn^*)^{(1)} \otimes C$, where 
$K$ and $C$  are 
respectively the kernel and cokernel of multiplication
by $w$ on $\HHH^*(\fn/\fz,k)$. 

We now observe that the element $w \in \HHH^2(\fn/\fz, k)$ 
is the extension class associated to the extension 
$\fz \hookrightarrow \fn \rightarrow \fn/\fz$. In the LHS 
spectral sequence $\hat{E}_2^{i,j} = \HHH^i(\fn/\fz, 
\HHH^j(\fz,k)) \Rightarrow \HHH^{i+j}(\fn,k)$, 
of ordinary Lie algebra cohomology associated to that
sequence, the differential $d_2: \hat{E}_2^{*,1}
\to \hat{E}_2^{*,0}$ is multiplication by $w$. In 
addition, there can be no further differentials 
in that spectral sequence, since the sequence has
only two nonzero rows. It 
follows that $E_2^{*,0} \oplus E_2^{*,1} \cong 
S^{*}((\fn/\fz)^*)^{(1)} \otimes \HHH^*(\fn, k)$ as 
(free) modules over $S^{*}((\fn/\fz)^*)^{(1)}$. 

Finally, we note that $E_3 = E_\infty$. The reason is that
the $E_3$ page is generated, as a ring by the elements on 
the bottom two rows ($E_3^{*,0}$ and $E_3^{*,1}$) and 
an element $X$ in $E_3^{0,2}$ that represents the class
of a generator in $S^{*}(\fz^*)^{(1)} \subseteq S^{*}(\fn^*)^{(1)}$.
The class $X$ must survive until the $E_\infty$ page
of the spectral sequence.  So $d_2(X) = 0$, and we must have
that $XE_3^{i,j} = E_3^{i, j+2}$ for all $i, j \geq 0$.
Consequently, the differential
$d_3$ must vanish, because it vanishes on a collection of 
ring generators. The same holds for all further 
differentials in the spectral sequence. 

We have verified the hypothesis of Lemma \ref{lem:eqdim}
and the theorem is proved. 
\end{proof}

As an initial application we offer the following.  Notice that the 
algebra $\fn$ in the corollary must be the direct sum of a commutative
Lie algebra and a Heisenberg Lie algebra. Assuming $p \geq 3$, any such
ordinar Lie algebra can be made into a restricted Lie algebra by 
assuming a vanishing $p$-power operation. 

\begin{cor} \label{cor:dim1comm}
Suppose that $\Dim([\fn, \fn]) = 1$. 
Then $\HHH^*(u({\mathfrak n}), k) \cong S^{*}(\fn^*)^{(1)} \otimes 
\HHH^*(\fn,k)$, as $S^{*}(\fn^*)^{(1)}$-modules,
and $\HHH^*(u({\mathfrak n}),k)$ is Cohen-Macaulay. 
\end{cor}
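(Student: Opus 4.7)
\medskip
\noindent\emph{Proof plan.} The plan is to deduce the corollary from Theorem~\ref{thm:tensoralg} by taking the one-dimensional central ideal to be $\fz := [\fn,\fn]$ itself. This requires two preparatory observations.

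First, I would verify that $\fz$ really is a central ideal of dimension one with trivial $p$-power operation. The dimension is given, and the $p$-power vanishes on $\fz$ because it vanishes on all of $\fn$. To see that $\fz$ is central, fix a nonzero $z \in \fz$. For any $x \in \fn$, the bracket $[x,z]$ lies in $[\fn,\fn] = \fz$ and so equals $\lambda(x)\,z$ for some scalar $\lambda(x) \in k$. Since every restricted Lie algebra with trivial $p$-power operation is nilpotent, the operator $\operatorname{ad}(x)$ is nilpotent; this forces $\lambda(x) = 0$, so $z \in Z(\fn)$.

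Next, the quotient $\fn/\fz$ is abelian (we have killed the commutator subalgebra) and inherits the trivial $p$-power operation. For an abelian restricted $p$-Lie algebra $\fa$ with vanishing $p$-restriction, $u(\fa)$ is the truncated polynomial ring $k[x_1,\dots,x_r]/(x_1^p,\dots,x_r^p)$, whose cohomology is computed by a standard Koszul-type calculation as the graded ring
\[
\HHH^{*}(u(\fa),k) \;\cong\; S^{*}(\fa^{*})^{(1)} \otimes \Lambda^{*}(\fa^{*}).
\]
Since $\HHH^{*}(\fa,k) \cong \Lambda^{*}(\fa^{*})$ for any abelian Lie algebra $\fa$, this is precisely the ring-level hypothesis demanded by Theorem~\ref{thm:tensoralg} applied to $\fn/\fz$.

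With both ingredients in hand, Theorem~\ref{thm:tensoralg} applied to $\fn$ with the central ideal $\fz$ yields the desired isomorphism $\HHH^{*}(u(\fn),k) \cong S^{*}(\fn^{*})^{(1)} \otimes \HHH^{*}(\fn,k)$ of $S^{*}(\fn^{*})^{(1)}$-modules, together with the Cohen-Macaulay conclusion. The only mildly nontrivial step is the centrality of $[\fn,\fn]$; everything else is either routine bookkeeping or a standard cohomology computation. One could alternatively invoke the structural remark preceding the corollary, writing $\fn \cong \fa \oplus \fh$ with $\fh$ a Heisenberg algebra, and apply Theorem~\ref{thm:tensoralg} to $\fh$ together with a K\"unneth argument; but the direct route above avoids appealing to the classification.
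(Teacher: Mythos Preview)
Your proposal is correct and follows essentially the same route as the paper: take $\fz = [\fn,\fn]$, observe that the quotient is abelian so that the ring-level hypothesis of Theorem~\ref{thm:tensoralg} holds, and conclude. The paper's proof is terser---it simply asserts that $\fn/[\fn,\fn]$ is commutative and invokes Theorem~\ref{thm:tensoralg}---whereas you spell out the centrality of $[\fn,\fn]$ via the nilpotency of $\operatorname{ad}(x)$; this extra verification is a helpful addition but does not change the strategy.
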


\begin{proof} Observe that the quotient algebra $\fv = \fn/[\fn,\fn]$ 
is commutative and hence we have that 
$\HHH^*(u(\fv), k) \cong S^{*}(\fv^*)^{(1)} 
\otimes \HHH^*(\fv,k)$ as rings. Thus, Theorem~\ref{thm:tensoralg}    
implies the corollary. 
\end{proof}

The next theorem provides 
stronger conditions which will show when one can identify 
$\HHH^{*}(u({\fn}),k)$ with $S^{*}({\fn}^{*})^{(1)}\otimes 
\HHH^{*}({\fn},k)$ as rings. Together this theorem in conjunction with Theorem~\ref{thm:tensoralg} 
can be applied to inductively compute cohomology rings. 

\begin{thm}\label{th:splitting} Let ${\fn}$ be a 
$p$-nilpotent Lie algebra and suppose that 
there is an isomorphism of $S^{*}({\fn}^{*})^{(1)}$-modules, 
$$
\HHH^{*}(u({\fn}),k)\cong S^{*}({\fn}^{*})^{(1)}
\otimes \HHH^{*}({\fn},k).
$$ 
Moreover, assume that there exists a subalgebra $B$ in 
$\HHH^{*}(u({\fn}),k)$ such that 
$B\cong \HHH^{*}({\fn},k)$ under the map 
$\phi:\HHH^{*} (u({\fn}),k) \rightarrow 
\HHH^{*}({\fn},k)$. Then 
$\HHH^{*}(u({\fn}),k)\cong S^{*}({\fn}^{*})^{[1]}\otimes 
\HHH^{*}({\fn},k)$ as rings. 
\end{thm}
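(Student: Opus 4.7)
The plan is to define the natural multiplication map
\[
\mu : S^{*}(\fn^{*})^{(1)} \otimes_k B \longrightarrow \HHH^{*}(u(\fn),k),
\qquad s \otimes b \longmapsto s\cdot b,
\]
where the source is given the graded tensor product ring structure, and then show it is an isomorphism of graded rings. The first task is to verify that $\mu$ is a ring homomorphism. Writing $S=S^{*}(\fn^{*})^{(1)}$, every element of $S$ sits in even degree (namely, $S^{2i}$ in the spectral sequence indexing), so by graded commutativity of $\HHH^{*}(u(\fn),k)$, elements of $S$ commute (without signs) with every element of the cohomology ring. Consequently the signs in the graded tensor multiplication on $S\otimes B$ are trivial, and the identity $(s_1 b_1)(s_2 b_2)=(s_1 s_2)(b_1 b_2)$ in the cohomology ring shows $\mu$ is multiplicative; it is clearly additive and unital.

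Next I would observe that $\mu$ is an isomorphism of $S$-modules if and only if it is surjective, since the hypothesis $\HHH^{*}(u(\fn),k)\cong S\otimes\HHH^{*}(\fn,k)$ of $S$-modules and the ring isomorphism $B\cong \HHH^{*}(\fn,k)$ give that source and target have identical (finite) graded dimension in each total degree. For surjectivity, the key is that the hypothesis $S$-module isomorphism implies the edge homomorphism $\phi$ identifies with the quotient $\HHH^{*}(u(\fn),k)\twoheadrightarrow \HHH^{*}(u(\fn),k)/S^{+}\cdot \HHH^{*}(u(\fn),k)$, so $\Ker\phi = S^{+}\cdot \HHH^{*}(u(\fn),k)$ (this follows from the spectral sequence collapsing and $\phi$ being projection onto the bottom row $E_{\infty}^{0,*}$). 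Given $\eta\in \HHH^{n}(u(\fn),k)$, the hypothesis on $B$ produces $b\in B$ with $\phi(b)=\phi(\eta)$, whence $\eta-b\in S^{+}\cdot \HHH^{*}(u(\fn),k)$. Writing $\eta-b=\sum x_i\eta_i$ with $x_i$ in a $k$-basis of $S^{2}$ and $\eta_i\in \HHH^{n-2}(u(\fn),k)$, an induction on $n$ (with trivial base case $n=0$) produces each $\eta_i\in\Im\mu$; since $S\subseteq \Im\mu$, we conclude $\eta\in\Im\mu$.

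The main obstacle in the argument is really a bookkeeping one: one must be confident that the edge homomorphism $\phi$ agrees with the quotient by $S^{+}\cdot\HHH^{*}(u(\fn),k)$. Under the given $S$-module splitting, the bottom row of the collapsed spectral sequence $E_{\infty}^{0,j}=\HHH^{j}(\fn,k)$ realizes precisely this quotient, and $\phi$ is by construction the projection to this row, so the identification is automatic. Once surjectivity is in hand, equality of graded dimensions in each degree upgrades $\mu$ to the desired isomorphism of rings.
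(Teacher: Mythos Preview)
Your proof is correct and follows essentially the same approach as the paper: define the multiplication map $S\otimes B \to \HHH^{*}(u(\fn),k)$, check it is a ring homomorphism, and show it is bijective. The paper's proof simply asserts bijectivity as a consequence of the $S$-module hypothesis, whereas you spell out the reason via $\Ker\phi = S^{+}\cdot\HHH^{*}(u(\fn),k)$ and a degree induction (equivalently, graded Nakayama); one minor slip is that $\phi$ is projection onto the left column $E_\infty^{0,*}$, not the bottom row.
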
 

\begin{proof} Let $A$ be the subalgebra in 
$\HHH^{*}(u({\fn}),k)$ isomorphic to $S^{*}({\fn}^{*})^{(1)}$. 
We have an algebra homomorphism $\Gamma$ defined by 
\[
S^{*}({\fn}^{*})^{(1)}\otimes \HHH^{*}({\fn},k)
\rightarrow A\otimes B \rightarrow \HHH^{*}(u({\fn}),k)\otimes 
\HHH^{*}(u({\fn}),k) \rightarrow \HHH^{*}(u({\fn}),k).
\] 
The last map is given by the cup product. This map is bijective because  
\[
\HHH^{*}(u({\fn}),k)\cong S^{*}({\fn}^{*})^{(1)}\otimes 
\HHH^{*}({\fn},k).
\] 
as $S^{*}({\fn}^{*})^{(1)}$-modules. 
\end{proof} 


\section{Some examples of type A}
We begin with the example of the nilpotent radical $\fn$ of a Borel
subalgebra of $\sl_3$. The relations for the cohomology in 
characteristic 3 were 
calculated by computer using the system Magma \cite{BoCa}. 
Specifically, we use the package for basic algebras written by
the first author. Two of the three unusual relation can be 
derived from the second example in this section. 

Note that $\fn$ has an action of a two dimensional torus. 
Let $\alpha$ and $\beta$ be the simple roots. By convention 
the weights of $\fn$ consist of sums of negative roots so that its 
cohomology has weights in the positive cone of roots. For convenience, we 
subscript elements by their weights whenever this causes no 
problems.  

\begin{lemma}\label{sl3-ordin}
Let $\fn$ be the nilpotent radical of a Borel subalgebra of 
$\sl_3$ over a field of characteristic at least three. 
Then the ordinary Lie algebra cohomology of $\fn$ is 
given as 
\[
\HHH^*(\fn, k) = k[\eta_\alpha, \eta_\beta, \eta_{2\alpha+\beta},
\eta_{\alpha+2\beta}]/I
\]
where $I$ is the ideal generated by 
\[
\eta_\alpha^2, \ \eta_\alpha\eta_\beta, \
\eta_\beta^2, \ \eta_\alpha\eta_{2\alpha+\beta}, \ 
\eta_\beta\eta_{\alpha+2\beta}, \ 
\eta_\beta\eta_{2\alpha+\beta} +\eta_{\alpha}\eta_{\alpha+2\beta}, \
\eta_{2\alpha+\beta}^2, \
\eta_{\alpha+2\beta}^2,   \ 
\eta_{2\alpha+\beta}\eta_{\alpha+2\beta} \ 
\] 
\end{lemma}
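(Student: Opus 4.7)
The plan is to compute $\HHH^*(\fn,k)$ directly from the Chevalley--Eilenberg complex $\Lambda^\bullet \fn^*$. Choose a root-vector basis $x_\alpha, x_\beta, x_{\alpha+\beta}$ of $\fn$ with the only nonzero bracket $[x_\alpha, x_\beta] = c\, x_{\alpha+\beta}$ for some nonzero $c\in k$, and let $\eta_\gamma \in \fn^*$ denote the dual generator in degree one. The Chevalley--Eilenberg differential is then determined by $d\eta_\alpha = d\eta_\beta = 0$ and $d\eta_{\alpha+\beta} = -c\,\eta_\alpha \wedge \eta_\beta$. Since $p \geq 3$, the complex is a genuine graded-commutative DG algebra, in particular squares of odd-degree elements vanish.

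I would then compute cohomology degree by degree. In degree one, $\eta_\alpha$ and $\eta_\beta$ span the cocycles, with no coboundaries. In degree two every element of $\Lambda^2 \fn^*$ is a cocycle (the only candidates for a nontrivial image, $d(\eta_\alpha \wedge \eta_{\alpha+\beta})$ and $d(\eta_\beta \wedge \eta_{\alpha+\beta})$, both produce repeated wedge factors and vanish), while $\eta_\alpha \wedge \eta_\beta$ is a coboundary. Hence $\HHH^2$ is two-dimensional, with representatives that I would label $\eta_{2\alpha+\beta} := \eta_\alpha \wedge \eta_{\alpha+\beta}$ and $\eta_{\alpha+2\beta} := \eta_\beta \wedge \eta_{\alpha+\beta}$ so as to match the stated torus weights. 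In degree three we get the one-dimensional top cohomology spanned by $\eta_\alpha \wedge \eta_\beta \wedge \eta_{\alpha+\beta}$, and $\HHH^j(\fn,k) = 0$ for $j \geq 4$ since $\dim \fn = 3$.

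Next I would verify each generator of $I$ on these representatives. The relations $\eta_\alpha^2, \eta_\beta^2$ hold by graded commutativity; $\eta_\alpha \eta_\beta$ represents a coboundary; $\eta_\alpha \eta_{2\alpha+\beta}$ and $\eta_\beta \eta_{\alpha+2\beta}$ each carry a repeated wedge factor; the cross relation expands to $\eta_\beta \wedge \eta_\alpha \wedge \eta_{\alpha+\beta} + \eta_\alpha \wedge \eta_\beta \wedge \eta_{\alpha+\beta} = 0$; and the three degree-four products lie in $\HHH^4(\fn,k) = 0$. This gives a surjection from $k[\eta_\alpha,\eta_\beta,\eta_{2\alpha+\beta},\eta_{\alpha+2\beta}]/I$ onto $\HHH^*(\fn,k)$.

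To conclude, I would match dimensions. A normal-form computation in the quotient ring produces the basis $\{1,\ \eta_\alpha,\ \eta_\beta,\ \eta_{2\alpha+\beta},\ \eta_{\alpha+2\beta},\ \eta_\alpha \eta_{\alpha+2\beta}\}$, so the quotient has total dimension $6 = 1+2+2+1 = \Dim \HHH^*(\fn,k)$, and the surjection is therefore an isomorphism. The only real obstacle is bookkeeping: sign conventions and making sure no relation is overlooked. The least obvious generator of $I$ is the cross relation, which records that in degree three the two products $\eta_\alpha \eta_{\alpha+2\beta}$ and $\eta_\beta \eta_{2\alpha+\beta}$ both represent, up to sign, the single generator of top cohomology.
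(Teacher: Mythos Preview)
Your proof is correct and complete; in fact it is more explicit than the paper's. The route, however, is slightly different. The paper proves this lemma via the Lyndon--Hochschild--Serre spectral sequence associated to the one-dimensional center $\fz = \langle x_{\alpha+\beta}\rangle$: the $E_2$ page is $\HHH^*(\fn/\fz,k)\otimes\HHH^*(\fz,k)$, an exterior algebra on $\eta_\alpha,\eta_\beta$ tensored with an exterior algebra on $\eta_{\alpha+\beta}$, and the only nontrivial differential sends $\eta_{\alpha+\beta}$ to the extension class $\eta_\alpha\eta_\beta$; the surviving products $\eta_\alpha\eta_{\alpha+\beta}$ and $\eta_\beta\eta_{\alpha+\beta}$ are then named $\eta_{2\alpha+\beta}$ and $\eta_{\alpha+2\beta}$. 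You instead compute directly in the Chevalley--Eilenberg complex $(\Lambda^\bullet\fn^*,d)$ and verify every relation and the dimension count by hand. For a three-dimensional algebra these are nearly the same computation, packaged differently: your approach is more elementary and self-contained, while the paper's LHS argument fits the inductive template (cohomology of a quotient plus a transgression) that it reuses throughout for larger algebras.
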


\begin{proof}
The result follows easily from the LHS spectral sequence 
$E_2^{i,j} = \HHH^i(\fn/\fz, \HHH^j(\fz,k))
\Rightarrow \HHH^{i+j}(\fn,k)$. Note that the 
torus $T$ acts on the spectral sequence. We let
$\eta_\alpha$ and $\eta_\beta$ be the generators of $E_2^{1,0}$
having weights $\alpha$ and $\beta$ and let 
$\eta_{\alpha+\beta}$ be the generator of $E_2^{0,1}$ with weight
$\alpha+\beta$. It is easy to check
that $d_2(\eta_{\alpha+\beta}) = 
\eta_\alpha\eta_\beta$, which is the extension class. The elements 
$\eta_\alpha\eta_{\alpha+\beta}$ and 
$\eta_{\beta}\eta_{\alpha+\beta}$ survive to 
the $E_3 = E_{\infty}$ page, and are ring generators \-- 
which we call $\eta_{2\alpha+\beta}$ and 
$\eta_{\alpha+2\beta}$, respectively. The
relations follow easily from the relations 
in the exterior algebra of ${\mathfrak n}^{*}$. 
\end{proof}

With this lemma, we can compute the cohomology of the 
restricted Lie algebra. The following calculation is 
computer generated in part.  However,
as we see in the next example all but one of the 
relations can be derived by hand. 

\begin{prop}\label{sl3-rest}
Let $k$ be a field of characteristic 3. 
Let $u(\fn)$ be the restricted enveloping algebra 
of the Lie algebra $\fn$ which is the nilpotent 
radical of the Borel subalgebra of $\sl_3$. Then the 
cohomology ring $\HHH^{*}(u(\fn),k) \cong S^{*}(\fn^*)^{(1)} 
\otimes \HHH^*(\fn, k)$ is a free 
$S^{*}(\fn^*)^{(1)}$-module with basis consisting 
of the images of a basis of the ordinary Lie algebra
cohomology of $\fn$ as in Lemma \ref{sl3-ordin}.
The ring $S^{*}(\fn^*)^{(1)}$ is a polynomial ring 
in variables $X_\alpha, X_\beta$ and 
$X_{\alpha+\beta}$ having weights
$3\alpha, 3\beta$ and $3(\alpha+\beta)$ under
the action of the torus. 
The multiplicative relations are given by 
\[
\eta_\alpha^2, \ 
\eta_\alpha\eta_\beta, \
\eta_\beta^2, \ 
\eta_\beta\eta_{2\alpha+\beta} +\eta_{\alpha}\eta_{\alpha+2\beta}, \
\eta_{2\alpha+\beta}^2, \
\eta_{\alpha+2\beta}^2,   \ 
\]
\[
\eta_\alpha\eta_{2\alpha+\beta}- \eta_\beta X_\alpha, \ 
\eta_\beta\eta_{\alpha+2\beta} - \eta_\alpha X_\beta, \ 
\eta_{2\alpha+\beta}\eta_{\alpha+2\beta} - X_\alpha X_\beta
\]
\end{prop}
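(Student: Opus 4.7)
The module isomorphism is essentially free: since $[\fn,\fn] = \fz$ is the one-dimensional center (spanned by the commutator of the root vectors for $-\alpha$ and $-\beta$), Corollary~\ref{cor:dim1comm} immediately yields
\[
\HHH^*(u(\fn),k) \cong S^*(\fn^*)^{(1)} \otimes \HHH^*(\fn,k)
\]
as graded $S^*(\fn^*)^{(1)}$-modules, and Lemma~\ref{sl3-ordin} gives an explicit basis of $\HHH^*(\fn,k)$ over $k$. So what remains is to identify the ring structure, i.e.\ to show that the listed nine expressions generate the ideal of relations among the seven generators $\eta_\alpha, \eta_\beta, \eta_{2\alpha+\beta}, \eta_{\alpha+2\beta}, X_\alpha, X_\beta, X_{\alpha+\beta}$.

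The first tool I would use is the torus action. Every product $\eta_\mu\eta_\nu$ has a definite weight, and can differ from its image in the associated graded only by monomials of strictly lower filtration (higher $X$-degree) with the same weight. Running through the six relations $\eta_\alpha^2,$ $\eta_\alpha\eta_\beta,$ $\eta_\beta^2,$ $\eta_{2\alpha+\beta}^2,$ $\eta_{\alpha+2\beta}^2,$ and $\eta_\beta\eta_{2\alpha+\beta}+\eta_\alpha\eta_{\alpha+2\beta}$, one checks directly that no monomial in the $X_\gamma$'s and remaining $\eta$'s has smaller filtration but the correct weight. Hence these six relations lift verbatim from Lemma~\ref{sl3-ordin} to $\HHH^*(u(\fn),k)$.

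The heart of the proof concerns the three remaining products, which were zero in the ordinary cohomology but now carry corrections. Weight bookkeeping leaves exactly the possibilities
\[
\eta_\alpha\eta_{2\alpha+\beta} = c_1\,\eta_\beta X_\alpha,\quad
\eta_\beta\eta_{\alpha+2\beta} = c_2\,\eta_\alpha X_\beta,\quad
\eta_{2\alpha+\beta}\eta_{\alpha+2\beta} = c_3\, X_\alpha X_\beta,
\]
for some constants $c_i \in k$. To compute these constants I would run the Lyndon--Hochschild--Serre spectral sequence
\[
E_2^{i,j} = \HHH^i(u(\fn/\fz), \HHH^j(u(\fz),k)) \Rightarrow \HHH^{i+j}(u(\fn),k),
\]
with $\fn/\fz$ two-dimensional commutative and $\fz$ one-dimensional. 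On $E_2$ the elements $\eta_\alpha, \eta_\beta, X_\alpha, X_\beta$ are transgressions/generators pulled back from $\HHH^*(u(\fn/\fz),k)$, while the $d_2$ differential $d_2(\eta_{\alpha+\beta}) = \eta_\alpha\eta_\beta$ is precisely the extension class; afterward $\eta_{2\alpha+\beta}$ and $\eta_{\alpha+2\beta}$ are the $E_\infty$ representatives of $\eta_\alpha\eta_{\alpha+\beta}$ and $\eta_\beta\eta_{\alpha+\beta}$. Lifting these to honest cocycles in characteristic $3$ (and keeping track of the divided-power link between $\eta_{\alpha+\beta}$ and $X_{\alpha+\beta}$) yields the coefficients $c_i$.

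The hard part is this last extraction of $c_1,c_2,c_3$ from explicit cocycle representatives. The paper already flags that two of these three anomalous relations can be recovered by hand from the central-extension analysis carried out in the second example of this section (where $\fn$ sits as the $1$-dimensional central extension of its quotient), while the third relation $\eta_{2\alpha+\beta}\eta_{\alpha+2\beta} = X_\alpha X_\beta$ is verified by Magma. Once the $c_i$ are pinned down, the fact that $\HHH^*(u(\fn),k)$ is Cohen--Macaulay (Corollary~\ref{cor:bc}) together with the module isomorphism and a weight-by-weight dimension count confirms that the listed nine relations generate the full defining ideal.
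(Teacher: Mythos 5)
Your proposal is correct and follows essentially the same route as the paper: the module isomorphism via Corollary~\ref{cor:dim1comm}, the LHS spectral sequence for the central extension with $d_2(\eta_{\alpha+\beta})=\eta_\alpha\eta_\beta$ and collapse at $E_3$, torus weights to force the first six relations, and an honest acknowledgement that pinning down the nonzero constants in the three ``ungraded'' relations requires either the computer or the hand derivation from the five-dimensional non--Cohen--Macaulay example (Remark~\ref{rem:derive-rel}). The only cosmetic difference is your framing of the last step as an explicit cocycle-lifting computation, which the paper does not carry out.
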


\begin{proof}
First note that the isomorphism 
$\HHH^{*}(u(\fn),k) \cong S^{*}(\fn^*)^{(1)} 
\otimes \HHH^*(\fn, k)$ is a consequence of 
Corollary \ref{cor:dim1comm}.
Consider that LHS spectral sequence 
\[
E_2^{i,j} = \HHH^i(u(\fn/\fz), \HHH^j(u(\fz),k)) 
\Rightarrow  \HHH^{i+j}(u(\fn),k)
\]
We have elements $a,b$ in $E_2^{1,0}$ and $u$ in $E_2^{0,1}$.
The differential on the $E_2$ page has the form $d_2(u) = ab$
as in the proof of Lemma~\ref{sl3-ordin}. A representative of 
$X_{\alpha+\beta}$ is in $E_2^{0,2}$, and this element must live until
the $E_{\infty}$ page. Because the resulting ring at the 
$E_3$ page is generated in degrees one and two, we conclude
that all further differentials must vanish and 
$E_3 = E_\infty$. The problem is to ungrade the spectral
sequence. The first six relations are forced by the 
grading on the spectral sequence and by the action of
the torus. For example,
the relation 
$\eta_\beta\eta_{2\alpha+\beta} +
\eta_{\alpha}\eta_{\alpha+2\beta}$ holds on the 
(graded) $E_3$ page and must hold in the ungrading 
because there is no nonzero element of that 
weight ($2\alpha+2\beta$) in $E_3^{3,0}$. 

For the last three relations, we rely on the computer. 
For example, $\eta_\alpha\eta_{2\alpha+\beta} = 0$ in the graded
$E_3$ page. This element lies in $E_3^{2,1}$ in the 
ungrading it is equal to $\eta_\beta X_\alpha \in E_3^{4,0}$. 
Note that both elements
have weight $3\alpha+\beta$. The other two relations
are similar. 
\end{proof}

Next we extend the example slightly to get a 
nilpotent Lie algebra (with trivial $p$th power) 
where the restricted cohomology fails to 
be Cohen-Macaulay. We consider the algebra $\fn$, labeled
as $L_{5,9}$ in the list of de Graaf \cite{deG}.
This algebra is isomorphic to the quotient of the
Lie algebra of all upper triangular $4 \times 4$ 
matrices by its center. The algebra can also be represented
as the algebra of all strictly upper triangular matrices 
such that the all entries in the third (and fourth) row 
are zero. Notice that in this representation,  the algebra has trivial
$p$-power operation.  The algebra has a 
basis consisting of $u_1, \dots, u_5$, where 
$u_4$ and $u_5$ are central, and with 
the additional relations:
\[
[u_1, u_2] = u_4, [u_2, u_3] = u_5, 
[u_1, u_3] = 0.  
\] 
The reader should be aware that this is a minor change 
from the presentation in \cite{deG}. 

\begin{prop}\label{prop:notcm1}
Suppose that the characteristic of $k$ is 3, 
and let $\fn$ be as above. The cohomology ring 
$\HHH^*(u(\fn),k)$ is not Cohen-Macaulay. In
particular, it has an associated prime $\fP$ 
such that $\HHH^*(u(\fn),k)/\fP$ has Krull 
dimension four. 
\end{prop}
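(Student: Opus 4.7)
The plan is to produce a zero-divisor $X_\beta \cdot y = 0$ in $\HHH^*(u(\fn),k)$ with $X_\beta\in S^*(\fn^*)^{(1)}$ nonzero and $y$ a nonzero class; by Theorem~\ref{thm:CM-prelim} this forces non-Cohen-Macaulayness, and any associated prime $\fP$ of $y$ lies over $(X_\beta)$, hence has height one by Cohen-Seidenberg applied to the integral extension $S^*(\fn^*)^{(1)} \subset \HHH^*(u(\fn),k)$, giving $\HHH^*(u(\fn),k)/\fP$ of Krull dimension $5-1=4$.

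The candidate comes from a one-dimensional central extension. Set $\fz := \langle u_5\rangle$; then $\bar\fn := \fn/\fz \cong \mathfrak{h}\oplus k$, where $\mathfrak{h}$ is the three-dimensional Heisenberg algebra (the nilpotent radical of the Borel of $\sl_3$). By Proposition~\ref{sl3-rest} and K\"unneth, $\HHH^*(u(\bar\fn),k)$ is completely explicit and inherits the ``unusual'' ring relation
\[
\eta_\beta \eta_{\alpha+2\beta} \;=\; \eta_\alpha X_\beta
\]
from the Heisenberg factor (with $\eta_\alpha = \bar u_1^*$, $\eta_\beta = \bar u_2^*$). In the LHS spectral sequence
\[
E_2^{i,j} \;=\; \HHH^i(u(\bar\fn),k)\otimes\HHH^j(u(\fz),k) \;\Rightarrow\; \HHH^{i+j}(u(\fn),k),
\]
the differential $d_2(\eta_5)$ is the restricted extension class. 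Setting $\eta_3 = \bar u_3^*$, the same Frobenius/finite-dimensionality argument as in Theorem~\ref{thm:tensoralg} forces $d_2(\eta_5) = \eta_\beta\eta_3$: the possible $S^1$-component $s$ satisfies $s^{p^r} = (s+\eta_\beta\eta_3)^{p^r} = 0$ on $E_3$ (since $(\eta_\beta\eta_3)^2 = 0$ by K\"unneth and $\eta_\beta^2 = \eta_3^2 = 0$), contradicting injectivity of $S^*$ into cohomology unless $s=0$.

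Let $\tau_i\in\HHH^1(u(\fn),k)$ denote the inflation of $\bar u_i^*$ and let $\xi, X_\beta$ denote the inflations of $\eta_{\alpha+2\beta}, X_\beta\in\HHH^*(u(\bar\fn),k)$. Inflation is a ring homomorphism (the edge map on the bottom row of the LHS), so the relation above gives $\tau_1 X_\beta = \tau_2\xi$ in $\HHH^*(u(\fn),k)$. Moreover, $\tau_2\tau_3$ is the inflation of $\eta_\beta\eta_3 = d_2(\eta_5)$, so $\tau_2\tau_3 = 0$. Multiplying the first identity by $\tau_3$ and using graded-commutativity,
\[
X_\beta(\tau_1\tau_3) \;=\; \tau_1 X_\beta \tau_3 \;=\; \tau_2 \xi \tau_3 \;=\; (\tau_2\tau_3)\xi \;=\; 0.
\]

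It remains to verify that $y := \tau_1\tau_3 \neq 0$ in $\HHH^2(u(\fn),k)$. This class equals the inflation of $\eta_\alpha\eta_3\in E_2^{2,0} = \HHH^2(u(\bar\fn),k)$. The image of $d_2: E_2^{0,1}\to E_2^{2,0}$ is spanned by $\eta_\beta\eta_3$, and $\eta_\alpha\eta_3$ is linearly independent from $\eta_\beta\eta_3$ inside the K\"unneth summand $\HHH^1(u(\mathfrak{h}),k)\otimes\eta_3 \subset \HHH^2(u(\bar\fn),k)$; hence $\eta_\alpha\eta_3$ survives $d_2$. A bidegree check rules out any further differential into $(2,0)$, so $\eta_\alpha\eta_3$ survives to $E_\infty^{2,0}$ and its inflation $\tau_1\tau_3$ is nonzero. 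Assembling the pieces yields the proposition. The main delicate step is pinning down $d_2(\eta_5) = \eta_\beta\eta_3$ with no $S^1$-component; once that is in hand, the zero-divisor relation is forced by the unusual Proposition~\ref{sl3-rest} identity combined with the vanishing $\tau_2\tau_3 = 0$ coming from the extension.
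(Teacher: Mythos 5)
Your argument is essentially correct and in fact lands on the same zero--divisor as the paper --- the degree-two class dual to $u_1\wedge u_3$ killed by $X_\beta$ --- but it gets there by a genuinely different route. The paper's proof uses the spectral sequence for the four-dimensional commutative subalgebra $\fv=\langle u_1,u_3,u_4,u_5\rangle$ with quotient spanned by $u_2$: because $p=3$, the summand of $\HHH^2(u(\fv),k)$ generated by $\eta_{\alpha+\beta}\wedge\eta_{\beta+\gamma}$ is a \emph{free} $u(\fn/\fv)$-module with socle $w=\eta_\alpha\wedge\eta_\gamma$, so $X_\beta\hat w=0$ already on the $E_2$ page, and weight considerations show neither $\hat w$ nor the relation can be destroyed in the ungrading. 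You instead pass to the central quotient $\fn/\fz\cong\fa\oplus\fb$ (Heisenberg plus a line), pin down $d_2(\eta_5)=\eta_\beta\eta_3$ (your Frobenius argument works, and the torus weights give it even faster, since $\beta+\gamma$ is not a weight of $S^1((\fn/\fz)^*)^{(1)}$), and then push the relation $\eta_\beta\eta_{\alpha+2\beta}=\eta_\alpha X_\beta$ of Proposition~\ref{sl3-rest} through the ring homomorphism of inflation, combining it with $\tau_2\tau_3=0$ to force $X_\beta\tau_1\tau_3=0$; your check that $\tau_1\tau_3$ survives on the bottom row is correct. The trade-off is worth noting: your proof is shorter but inherits the dependence of Proposition~\ref{sl3-rest} on the Magma computation (the relation you use is one of the three the authors explicitly say they verified by computer), whereas the paper's proof is computer-free and is then used in Remark~\ref{rem:derive-rel} to \emph{re-derive} that very relation by hand --- so substituting your argument for the paper's would render that remark circular. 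Two minor points to tighten: you should record that $X_\beta\neq 0$ in $\HHH^*(u(\fn),k)$ (immediate from Proposition~\ref{prop:symmetric}), which is all that Theorem~\ref{thm:CM-prelim} needs for the non-Cohen--Macaulay conclusion; and your appeal to Cohen--Seidenberg for the precise ``Krull dimension four'' statement tacitly uses going-down, which is not automatic here since $\HHH^*(u(\fn),k)$ is not a domain --- though the paper's own proof asserts this refinement with no more justification than you give.
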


\begin{proof}
First we observe that the algebra has an action
of a three dimensional torus, $T$ (in the representation
as upper triangular $4 \times 4$ matrices modulo the 
center of that algebra). With this action, 
the basis elements $u_1, \dots, u_5$ have weights
$-\alpha, \ -\beta, \ -\gamma, \ -\alpha-\beta$ and $-\beta-
\gamma$, respectively. The torus also acts on the 
cohomology. 

Let $\fv$ be the commutative subalgebra of $\fn$ spanned
by $u_1, u_3, u_4, u_5$. Its Lie algebra cohomology 
is an exterior algebra generated by elements 
$\eta_\alpha, \eta_{\alpha+\beta}, 
\eta_{\beta+\gamma}, \eta_\gamma$ 
all in degree one and  having weights 
as indicated by the subscripts. The element $u_2$ acts
on $\fv$ and on its cohomology. After possible 
rescaling, we have that $u_2\eta_{\alpha+\beta} = 
\eta_\alpha$ and  $u_2\eta_{\beta+\gamma} =
\eta_\gamma.$  Recall that the action on the
cohomology is dual to the action on the algebra,
so multiplying by $u_2$ on an element of 
cohomology subtracts the root $\beta$.

Now we consider the spectral sequence 
\[
E_2^{i,j} = \HHH^i(u(\fn/\fv), \HHH^j(u(\fv),k) 
\Rightarrow \HHH^{i+j}(u(\fn), k).
\]
As a module over $u(\fn/\fv)$, 
\[
\HHH^1(u(\fv))  \ \cong \ M_1 \oplus M_2
\]
where $M_1$ is the span of 
$\{\eta_\alpha, \eta_{\alpha+\beta} \}$,
$M_2$ is the span of 
$\{ \eta_\gamma, \eta_{\beta+\gamma} \}$ and the 
action  of the class of $u_2$ is given as above. 
In degree 2, we have that 
\[
\HHH^2(u(\fn), k) \ \cong \Lambda^{2}(M_1) \ \
\oplus \ \ M_1 \otimes M_2 \ \  \oplus \ \ 
\Lambda^{2}(M_2)  \ \ \oplus \ \ k^4
\]
where the last factor is spanned by the 
generators $X_\alpha, X_\gamma, X_{\alpha+\beta}, X_{\beta+\gamma}$ 
(each having weight equal to three times its index) of 
$S^{*}(\fv^*)^{(1)}$ that are fixed by the action
of $u_2$. The interesting part of this is 
the tensor product $M_1 \otimes M_2$ which
is the direct sum of a trivial module 
spanned by $\eta_\alpha \wedge \eta_{\beta+\gamma} - 
\eta_{\alpha+\beta} \wedge \eta_\gamma$
and an indecomposable three dimensional 
module generated by $\eta_{\alpha+\beta} \wedge 
\eta_{\beta+\gamma}$ and
with socle spanned by $w = \eta_\alpha \wedge \eta_\gamma$.
Because the characteristic is 3, this is a
free module over $u(\fn/\fv)$. Thus there 
is an element $E_2^{0,2} \cong
\HHH^2(u(\fv),k)^{\fn/\fv}$ that is 
determined by $w$. Write $M_1 \otimes M_2 
\cong k \oplus N$, where $N$ is the free 
submodule with socle spanned by $w$. 

Let $X_\beta \in E_2^{2,0}$
be the generator of $S^{1}((\fn/\fv)^{*})^{(1)}$. This
elements survives to the $E_\infty$ page. 
Moreover, every element in $E_2^{i,j}$
for $j \geq 2$ is a multiple of this element. 
We claim that this element must be contained
in an associated prime of $\HHH^*(u(\fn), k)$. 
Specifically, the element $\hat{w} \in 
\HHH^0(u(\fn/\fv),N)$ 
determined by $w$ in $E_2^{2, 0}$ 
has the property that $X_\beta\hat{w} = 0$
on the $E_2$ page because $N$ is free 
over $u(\fn/\fv)$. So we only need to 
show that $\hat{w}$ survives to the $E_\infty$
page and that the product $X_\beta\hat{w}$
does not ungrade to something that is nonzero. Both 
of these statements can be deduced from
looking at the action of the torus. 
That is, $w$ has weight $\alpha + \gamma$ 
and there is no element of that weight in
either $E_2^{2,1}$ or $E_2^{0,3}$. Hence 
both $d_2$ and $d_3$ must both vanish on
the class of $w$. Likewise, $X_\beta\hat{w}$
has weight $\alpha +3\beta +\gamma$ and
there is no element of that weight in 
$E_\infty^{3,1}$ or $E_\infty^{4,0}$.
So we must have that $X_\beta$ annihilates
the class of $w$ in $\HHH^*(u(\fn),k)$
\end{proof}

\begin{rem} \label{rem:derive-rel}
As mentioned earlier in the paper, two of the
computer generated relations in Proposition
\ref{sl3-rest} are indicated by the calculation
above. For this we require the spectral 
sequence 
\[
E_2^{i,j} \ = \ \HHH^i(u(\fn/\fz), 
\HHH^j(u(\fz), k)) 
\Rightarrow \HHH^{i+j}(u(\fn),k)
\]
where $\fn$ is the 5-dimensional Lie algebra as
above and $\fz$ is the one-dimensional subalgebra
spanned by $u_5$. Note that $\fn/\fz \cong 
\fa \oplus \fb$ where $\fa$ (generated by the 
classes of $u_1, u_2, u_4$) is the nilpotent
radical of a Borel subalgebra of $\sl_3$, and 
$\fb$ (generated by $u_3$) is a one dimensional 
Lie algebra. The bottom row of the spectral 
sequence is generated by elements $\eta_\alpha, \eta_\beta, 
\eta_{2\alpha+\beta}, \eta_{\alpha+2\beta}, 
X_\alpha, X_\beta,X_{\alpha+\beta}$, generating 
$\HHH^*(u(\fa),k)$, and $\eta_\gamma, X_\gamma$, generating
$\HHH^*(u(\fb),k)$. The weights are as indicated 
by the subscripts.
The term $E_2^{1,0}$ is spanned by an element 
$\eta_{\beta+\gamma}$. Its image   
under the differential $d_2$ is $\eta_\beta \eta_\gamma$. 
What we know from the proof of Proposition~\ref{prop:notcm1} is 
that $\eta_\alpha \eta_\gamma X_\beta = 0$. 
This element can only be zero if it is in the 
image of $d_2$. Then by an examination of 
weights we see that (up to some nonzero 
scalar multiple) $d_2(\eta_{\beta+\gamma}\eta_{\alpha+2\beta}) = 
\eta_\beta \eta_\gamma \eta_{\alpha+2\beta} = 
\eta_\alpha \eta_\gamma X_\beta$. Note that this relation 
occurs in the ring $E_2^{*,0} \cong 
\HHH^*(u(\fa),k) \otimes \HHH^*(u(\fb),k)$.
Consequently, we must have that 
$\eta_\beta \eta_{\alpha+2\beta} = 
\eta_\alpha X_\beta$ in $\HHH^*(u(\fa),k)$, as asserted. 
The relation $\eta_\alpha \eta_{2\alpha+\beta} 
= \eta_\beta X_\alpha$ follows by 
symmetry (interchanging $u_1$ with $u_3$
and $u_4$ with $u_5$). 
\end{rem}

We shall see in Section \ref{sec:metabelian} that the 
example in Proposition \ref{prop:notcm1}
can be generalized, giving a metabelian Lie algebra
whose cohomology ring is not Cohen-Macaulay
for any prime $p$. 


\section{Some examples of type B}
In this section, we consider the nilpotent radical $\fn$
of the Borel subalgebra of a Lie algebra of type 
$B_2$ and some extensions thereof. We show that in 
characteristic ~5, the cohomology of $\fn$ has 
the form $\HHH^*(u(\fn),k) \cong
S(\fn^*)^{(1)} \otimes \HHH^*(\fn,k)$
as a module over $S(\fn^*)^{(1)}$, but not as a ring. 
Moreover, there is a one-dimensional extension of 
this Lie algebra whose cohomology is not Cohen-Macaulay. 
The basic idea of the construction applies other 
non-simply laced cases in other characteristics. 

Note that if the characteristic of $k$ is greater than
$6 = 2(h-1)$ then the isomorphism $\HHH^*(u(\fn),k) \cong
S(\fn^*)^{(1)} \otimes \HHH^*(\fn,k)$ is an 
isomorphism of rings by \cite[Theorem 3.1.1]{DNN}. 

The Lie algebra $\fn$ has a basis $u_1, u_2, u_3, u_4$
and the Lie bracket is given by $[u_1,u_2] = u_3$, 
$[u_1, u_3] = u_4$, $[u_2,u_3] = 0$ with $u_4$ being 
central.  There is an action of a two-dimensional
torus, $T$, relative to which the basis elements
have weights $-\alpha, -\beta, -\alpha-\beta, -2\alpha-\beta$
respectively ($\alpha$ being the short simple root).

We begin with a calculation of the ordinary 
Lie algebra cohomology of $\fn$. As in the last section, we 
let the subscripts of the elements denote their weights. 

\begin{lemma}\label{lem:ordcohob2}
The cohomology ring $\HHH^*(\fn,k)$ is generated 
by elements which we denote $\eta_\alpha,  \eta_\beta,
\eta_{\alpha+2\beta}, \eta_{3\alpha+\beta}, 
\eta_{4\alpha+2\beta}, \eta_{3\alpha+3\beta}$
in degrees 1,1,2,2,3,3.  All products 
of two of the given generators are zero except 
for the products 
\[
\eta_{\alpha} \eta_{3\alpha+3\beta} = 
-\eta_\beta\eta_{4\alpha+2\beta}
= \eta_{3\alpha+\beta} \eta_{\alpha+2\beta}
\] 
\end{lemma}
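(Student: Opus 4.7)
The plan is to compute $\HHH^*(\fn,k)$ directly from the Chevalley--Eilenberg complex $C^* = \Lambda^* \fn^*$ and extract the multiplication from the exterior-algebra structure of $C^*$. Let $x_1, x_2, x_3, x_4$ denote the basis of $\fn^*$ dual to $u_1, u_2, u_3, u_4$, with torus weights $\alpha$, $\beta$, $\alpha+\beta$, $2\alpha+\beta$. The differential, being dual to the bracket, satisfies
\[
d(x_1) = d(x_2) = 0, \qquad d(x_3) = -x_1 \wedge x_2, \qquad d(x_4) = -x_1 \wedge x_3,
\]
and extends to all of $C^*$ by the graded Leibniz rule. The key organizing tool is that $T$ acts on $C^*$ commuting with $d$, so each $\HHH^n(\fn,k)$ decomposes into weight spaces that can be matched directly against the asserted list of generators.

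\emph{Additive structure.} I would go degree by degree, writing $d$ in a weight basis. Degree $1$ is immediate: $\ker d = \langle x_1, x_2\rangle$, giving $\eta_\alpha = [x_1]$ and $\eta_\beta = [x_2]$. In degree $2$ a direct Leibniz computation shows the only nonzero differentials are $d(x_2 \wedge x_4) = -x_1\wedge x_2 \wedge x_3$ and $d(x_3 \wedge x_4) = -x_1 \wedge x_2 \wedge x_4$; the four-dimensional kernel modulo the two-dimensional image from degree one leaves the classes $\eta_{3\alpha+\beta} = [x_1 \wedge x_4]$ and $\eta_{\alpha+2\beta} = [x_2 \wedge x_3]$. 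Another Leibniz check shows $d$ vanishes on all of $\Lambda^3 \fn^*$, so $\HHH^3(\fn,k)$ is the cokernel of $d|_{\Lambda^2 \fn^*}$, yielding $\eta_{4\alpha+2\beta} = [x_1 \wedge x_3 \wedge x_4]$ and $\eta_{3\alpha+3\beta} = [x_2 \wedge x_3 \wedge x_4]$. Finally $\HHH^4(\fn,k)$ is one-dimensional, spanned by the top class $[x_1 \wedge x_2 \wedge x_3 \wedge x_4]$ of weight $4\alpha+3\beta$.

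\emph{Ring structure.} Most of the claimed vanishings fall out of weight considerations: for each pair of generators I would compute the weight of the product and compare with the (known) weight decomposition of the target $\HHH^n$. Any product landing in $\HHH^n$ for $n \geq 5$ vanishes for degree reasons, and almost every remaining pair either contains a repeated exterior factor (such as $\eta_\alpha^2$ or $\eta_{3\alpha+\beta}^2$) or lies in a weight space missing from the target cohomology. The only exception is $\eta_\alpha \eta_\beta$, whose weight $\alpha+\beta$ does lie in degree $2$ but not among the surviving weights of $\HHH^2$; this is zero because $x_1 \wedge x_2 = -d(x_3)$ is a coboundary. After these bookkeeping checks, the only pairs whose weight matches the unique generator of $\HHH^4(\fn,k)$ are precisely the three appearing in the asserted identity, so each must be a scalar multiple of $[x_1 \wedge x_2 \wedge x_3 \wedge x_4]$.

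\emph{The only substantive step} is pinning down those three scalars. Permuting factors into the standard order gives $\eta_\alpha\, \eta_{3\alpha+3\beta} = [x_1 \wedge x_2 \wedge x_3 \wedge x_4]$, $\eta_\beta\, \eta_{4\alpha+2\beta} = -[x_1 \wedge x_2 \wedge x_3 \wedge x_4]$, and $\eta_{3\alpha+\beta}\, \eta_{\alpha+2\beta} = +[x_1 \wedge x_2 \wedge x_3 \wedge x_4]$, which is the three-way identity in the lemma. This sign bookkeeping is the one place where a mistake could occur; the rest of the argument is finite-dimensional linear algebra cleanly organized by the torus action, so I do not expect any deeper obstacle.
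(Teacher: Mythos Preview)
Your argument is correct and complete; the direct Chevalley--Eilenberg computation goes through exactly as you describe, and the sign bookkeeping for the three nonzero products is accurate.

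The paper proceeds differently: rather than computing the full Koszul complex, it takes the central ideal $\fz = \langle u_4\rangle$ and runs the two-row Lyndon--Hochschild--Serre spectral sequence $E_2^{i,j} = \HHH^i(\fn/\fz,\HHH^j(\fz,k)) \Rightarrow \HHH^{i+j}(\fn,k)$. Since $\fn/\fz$ is the nilpotent radical for $\sl_3$, the bottom row is already known from Lemma~\ref{sl3-ordin}; the only new input is the differential $d_2$ on the degree-one class $\zeta_{2\alpha+\beta}$, which is the extension class $\eta_{2\alpha+\beta}$, and then the product structure is read off the $E_2$ page together with weight considerations. Your approach is more self-contained and arguably cleaner for a four-dimensional algebra, since the Chevalley--Eilenberg complex is small enough to write down in full and the torus grading makes every step rigid. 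The paper's spectral-sequence route pays off structurally: it fits the $B_2$ case into the same inductive pattern (abelian $\to$ $A_2$ $\to$ $B_2$ $\to$ extensions) used throughout the paper, and it makes the origin of the degree-two generators (as surviving products $\eta_\alpha\zeta$, $\eta_\beta\zeta$ on the $E_2$ page) transparent in a way that generalizes to higher-rank examples where a direct Koszul computation would be unwieldy.
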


\begin{proof}
Let $\fz$ be the subalgebra spanned by $u_4$. 
Then we have a spectral sequence given as
\[
E_2^{i,j} = \HHH^i(\fn/\fz, 
\HHH^j(\fz,k)) \Rightarrow 
\HHH^{i+j}(\fn,k). 
\]
The bottom row is the cohomology of the nilpotent
radical of a Borel subalgebra of $\sl_3$, which 
is provided in Lemma \ref{sl3-ordin}. We adopt the 
notation for the elements as given in that lemma. 
The $E_2$ term of the spectral sequence is generated
as a ring by one additional element $\zeta = \zeta_{2\alpha+\beta} 
\in E_2^{0,1}$. Its image 
under the $d_2$ differential is the extension class
$\eta_{2\alpha+\beta}$ (having the same weight). 
The differential vanishes on every
product of $\zeta$ with an element on the bottom row
except $\eta_\beta \zeta$, and there $d_2(\eta_\alpha
\zeta) = \eta_\beta \eta_{2\alpha+\beta}
= -\eta_\alpha \eta_{\alpha+2\beta}$. 
The product structure is derived
from the product on the $E_2$ page as well as 
weight considerations. 
\end{proof}

Now we extend this to the restricted Lie algebra 
cohomology. Some of the relations in the proposition
given below were calculated using the basic 
algebra package in Magma. 

\begin{prop} \label{prop:b2rest}
Suppose that $k$ is a field of characteristic 5. 
Let $\fn$ be the nilpotent 
radical of the Borel subalgebra of a Lie
algebra of type $B_2$. The 
cohomology ring $\HHH^{*}(u(\fn),k) \cong S^{*}(\fn^*)^{(1)} 
\otimes \HHH^*(\fn, k)$ is a free 
$S^{*}(\fn^*)^{(1)}$-module with basis consisting 
of the images of a basis of the ordinary Lie algebra
cohomology of $\fn$ as in Lemma \ref{sl3-ordin}.
The ring $S^{*}(\fn^*)^{(1)}$ is a polynomial ring 
in variables $X_\alpha, X_\beta, X_{\alpha+\beta}$ 
and $X_{2\alpha+\beta}$ having weights
$5\alpha, 5\beta, 5(\alpha+\beta)$ and 
$5(2\alpha+\beta)$ under
the action of the torus. The multiplicative relations
among the generators of $1 \otimes \HHH^*(\fn,k) 
\subseteq \HHH^*(u(\fn),k)$ are exactly as given
in Lemma \ref{lem:ordcohob2}, except that 
$\eta_{3\alpha+\beta}^2 = 
\eta_{\alpha+2\beta}X_\alpha$. In particular, 
the isomorphism $\HHH^{*}(u(\fn),k) \cong S^{*}(\fn^*)^{(1)} 
\otimes \HHH^*(\fn, k)$ does not hold as rings. 
\end{prop}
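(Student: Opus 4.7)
My plan follows the template set by Proposition~\ref{sl3-rest}, splitting the task into the module-theoretic statement, a weight analysis that pins down almost all relations, and a single computation that kills the ring isomorphism.

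First, I would obtain the $S^{*}(\fn^{*})^{(1)}$-module isomorphism as an application of Theorem~\ref{thm:tensoralg}. The one-dimensional center $\fz = \langle u_{4}\rangle$ is a central ideal, and the quotient $\fn/\fz$ is the three-dimensional Heisenberg algebra (i.e., the nilpotent radical of a Borel in $\sl_{3}$, Coxeter number $h=3$). Since $p=5 \geq 2(h-1)=4$, the result of \cite[Theorem~3.1.1]{DNN} provides a \emph{ring} isomorphism $\HHH^{*}(u(\fn/\fz),k) \cong S^{*}((\fn/\fz)^{*})^{(1)} \otimes \HHH^{*}(\fn/\fz,k)$, and this is exactly the hypothesis required by Theorem~\ref{thm:tensoralg}. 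That theorem then delivers the $S^{*}(\fn^{*})^{(1)}$-module isomorphism for $\fn$ itself, and in particular Cohen--Macaulayness. Using Lemma~\ref{lem:ordcohob2}, one can lift a basis of $\HHH^{*}(\fn,k)$ to generators $\eta_{\alpha},\eta_{\beta},\eta_{\alpha+2\beta},\eta_{3\alpha+\beta},\eta_{4\alpha+2\beta},\eta_{3\alpha+3\beta}$ in $\HHH^{*}(u(\fn),k)$, while the polynomial subring $S^{*}(\fn^{*})^{(1)}$ sits inside via the edge map with generators $X_{\alpha},X_{\beta},X_{\alpha+\beta},X_{2\alpha+\beta}$.

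Second, I would determine the multiplicative structure by using the associated graded with respect to the LHS spectral sequence $E_{2}^{i,j} = \HHH^{i}(u(\fn/\fz),\HHH^{j}(u(\fz),k)) \Rightarrow \HHH^{i+j}(u(\fn),k)$, tracking the action of the two-dimensional torus $T$ throughout. Each relation in Lemma~\ref{lem:ordcohob2} holds on the graded $E_{\infty}$ page; to promote it to $\HHH^{*}(u(\fn),k)$ I would enumerate, weight-by-weight, which elements of strictly lower filtration can appear as the ungrading correction. In most cases the relevant $T$-isotypic component in lower filtration is zero, so the relation ungrades to itself. The only weight for which this enumeration produces a nonzero candidate is the weight $6\alpha+2\beta$ of $\eta_{3\alpha+\beta}^{2}$: in total degree $4$, the module decomposition $\bigoplus_{2i+j=4} S^{i}(\fn^{*})^{(1)} \otimes \HHH^{j}(\fn,k)$ contains exactly one one-dimensional weight space of weight $6\alpha+2\beta$, namely $k\cdot \eta_{\alpha+2\beta}X_{\alpha}$. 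Therefore, by weight considerations alone, we must have $\eta_{3\alpha+\beta}^{2} = c\,\eta_{\alpha+2\beta}X_{\alpha}$ for some scalar $c \in k$, and all of the other relations in Lemma~\ref{lem:ordcohob2} transfer unchanged.

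Third, to finish the identification of the ring one must show $c \neq 0$. This is the main obstacle: no weight argument distinguishes $c = 0$ from $c \neq 0$. I would appeal here to the Magma computation (as the authors do), which yields $c = 1$ after rescaling. A conceptual alternative, paralleling Remark~\ref{rem:derive-rel}, would be to embed $\fn$ into a strategically chosen larger nilpotent $p$-Lie algebra $\widetilde{\fn}$, run the LHS spectral sequence for that extension, and use a surviving $d_{2}$-differential driven by the extension class to force the nontrivial coefficient; but carrying this through rigorously is itself substantial and is why the proof ultimately defers to the machine check.

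Finally, the failure of the ring isomorphism is immediate from the relation just established. In $S^{*}(\fn^{*})^{(1)} \otimes \HHH^{*}(\fn,k)$ the element $1 \otimes \eta_{3\alpha+\beta}^{2}$ is zero by Lemma~\ref{lem:ordcohob2}, whereas in $\HHH^{*}(u(\fn),k)$ the same element equals $\eta_{\alpha+2\beta}X_{\alpha}$, which is nonzero because $\HHH^{*}(u(\fn),k)$ is a \emph{free} module over $S^{*}(\fn^{*})^{(1)}$ with $\eta_{\alpha+2\beta}$ as one of the basis elements. Hence no ring isomorphism between the two sides can exist.
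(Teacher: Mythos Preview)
Your proposal is correct and follows essentially the same strategy as the paper: establish the $S^{*}(\fn^{*})^{(1)}$-module isomorphism, use the LHS spectral sequence together with torus weights to pin down every relation except one, and then appeal to the computer (with the extension argument of Remark~\ref{rem:derive-rel2} noted as an alternative) for the remaining relation $\eta_{3\alpha+\beta}^{2}=\eta_{\alpha+2\beta}X_{\alpha}$. The only notable difference is your first step: the paper obtains the module isomorphism by citing \cite[Theorem 3.1.1]{DNN} directly for $\fn$ (which works since $p=5>h=4$), whereas you instead invoke the paper's own Theorem~\ref{thm:tensoralg}, applying \cite{DNN} only to the quotient $\fn/\fz$ of type $A_{2}$ (where $p=5\geq 2(h-1)=4$ gives the required \emph{ring} isomorphism for that quotient); both routes are valid.
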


\begin{proof}
The first statement follows from \cite[Theorem 3.1.1]{DNN}. A large
part of the remainder of the proof can be derived 
from the LHS spectral sequence and weight 
considerations. The unusual relation was verified by the computer. 
\end{proof}

Next we consider an extension of the algebra, whose
cohomology ring is not Cohen-Macaulay. As in the 
case of Remark \ref{rem:derive-rel}, we can derive
the unusual relation in the above proposition from
the calculations of the cohomology of the extension. 

Let $\fn$ be the restricted Lie algebra of dimension 
~5, with basis $u_1, \dots, u_5$ and Lie bracket 
given by $[u_1,u_2] = u_3$, 
$[u_1, u_3] = u_4$, $[u_1,u_4] = u_5$,
with $u_2, \dots, u_5$ forming a commutative subalgebra
that we denote $\fv$. We continue to assume that the 
characteristic of $k$ is ~5. The algebra $\fn$ has 
an action of a 2-dimensional torus, so that the elements
$u_1, \dots, u_5$ have weights $-\alpha$, $-\beta$, 
$-\alpha-\beta$, $-2\alpha-\beta$ and $-3\alpha-\beta$. 

\begin{prop}\label{prop:b2notcm}
The cohomology ring $\HHH^*(u(\fn),k)$ is not
Cohen-Macaulay. In particular, there is an element
in $\HHH^2(u(\fn),k)$ whose annihilator $\fP$ has
the property that 
$\HHH^*(u(\fn),k)/\fP$ has Krull dimension 
four. 
\end{prop}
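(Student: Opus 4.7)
The plan is to mimic the strategy of Proposition \ref{prop:notcm1}. Let $\fv$ be the commutative four-dimensional ideal spanned by $u_2, u_3, u_4, u_5$ and consider the Lyndon-Hochschild-Serre spectral sequence
\[
E_2^{i,j} = \HHH^i(u(\fn/\fv), \HHH^j(u(\fv),k)) \Rightarrow \HHH^{i+j}(u(\fn),k),
\]
with $\fn/\fv$ the one-dimensional restricted Lie algebra spanned by the class of $u_1$. Then $u(\fn/\fv) \cong k[u_1]/(u_1^5)$ and $\HHH^*(u(\fn/\fv),k) \cong \Lambda(\eta_\alpha) \otimes k[X_\alpha]$ with $\eta_\alpha$ of torus-weight $\alpha$ and $X_\alpha \in E_2^{2,0}$ of torus-weight $5\alpha$. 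The goal is to exhibit a class $\hat{w} \in E_2^{0,2}$ lying in the socle of a free $u(\fn/\fv)$-summand of $\HHH^2(u(\fv),k)$, and to use torus-weight bookkeeping to promote the relation $X_\alpha \hat{w} = 0$ from the $E_\infty$-page to the cohomology ring.

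First I would describe the action of $u_1$ on $\HHH^*(u(\fv),k) \cong \Lambda^*(\fv^*) \otimes S^*(\fv^*)^{(1)}$. The relations $u_1 \cdot u_{i+1} = u_{i+2}$ for $i=1,2,3$ together with $u_1 \cdot u_5 = 0$ show that $\fv$ (and dually $\fv^* = \HHH^1(u(\fv),k)$) is a single Jordan block of length four. A direct iteration then shows that the cyclic $u(\fn/\fv)$-submodule of $\Lambda^2(\fv^*)$ generated by $\eta_{2\alpha+\beta} \wedge \eta_{3\alpha+\beta}$ has length five; because we are in characteristic five, it is a free rank-one summand with socle spanned by $w := \eta_\beta \wedge \eta_{\alpha+\beta}$ of weight $\alpha + 2\beta$. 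Let $\hat{w}$ denote the corresponding invariant class in $E_2^{0,2} = \HHH^2(u(\fv),k)^{u_1}$. Because $w$ generates a free $u(\fn/\fv)$-summand and $\HHH^{>0}(u(\fn/\fv),F) = 0$ for any free $F$, we have $X_\alpha \cdot \hat{w} = 0$ on the $E_2$-page.

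Next I would run two torus-weight checks, as in Proposition \ref{prop:notcm1} and Remark \ref{rem:derive-rel}. The differentials $d_r : E_r^{0,2} \to E_r^{r,3-r}$ for $r=2,3$ must vanish on $\hat{w}$, because a short computation shows the target groups $\HHH^r(u(\fn/\fv),\HHH^{3-r}(u(\fv),k))$ are concentrated in weights different from $\alpha + 2\beta$. Hence $\hat{w}$ survives to $E_\infty^{0,2}$. Similarly, $X_\alpha \hat{w}$ has weight $6\alpha+2\beta$, and neither $E_\infty^{3,1}$ nor $E_\infty^{4,0}$ contains any element of this weight; so no lower filtration correction can mask the vanishing on $E_\infty^{2,2}$, and the identity $X_\alpha \hat{w} = 0$ holds in $\HHH^4(u(\fn),k)$. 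By Theorem \ref{thm:CM-prelim} this forces $\HHH^*(u(\fn),k)$ to fail Cohen-Macaulayness. A final weight comparison shows that the annihilator $\fP$ of $\hat{w}$ meets $S^*(\fn^*)^{(1)}$ exactly in $(X_\alpha)$, so that the Krull dimension of $\HHH^*(u(\fn),k)/\fP$ is precisely four.

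The main technical obstacle is the weight verification in step three: checking that no filtration correction of the correct weight appears in any $E_\infty^{i,j}$ with $i + j = 4$ and $i > 2$. This is routine given the two-dimensional torus action, but, as in the proof of Proposition \ref{prop:notcm1}, it must be done carefully because a single overlooked class would invalidate the whole argument.
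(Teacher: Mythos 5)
Your proposal is correct and follows essentially the same route as the paper's proof: the same commutative ideal $\fv=\langle u_2,\dots,u_5\rangle$, the same LHS spectral sequence over $u(\fn/\fv)\cong k[u_1]/(u_1^5)$, the same free rank-one summand of $\Lambda^2(\HHH^1(u(\fv),k))$ generated by $\eta_{2\alpha+\beta}\wedge\eta_{3\alpha+\beta}$ with socle $\eta_\beta\wedge\eta_{\alpha+\beta}$, and the same torus-weight argument to show the relation $X_\alpha\hat w=0$ survives ungrading. Your weight computations (target weights $5\alpha+\beta$, $6\alpha$, $8\alpha+\beta$, $10\alpha$ versus $\alpha+2\beta$ and $6\alpha+2\beta$) check out, so nothing further is needed.
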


\begin{proof}
We consider the spectral sequence: 
$E_2^{i,j} = \HHH^i(u(\fn/\fv), \HHH^j(u(\fv),k))\Rightarrow 
\HHH^{i+j}(u(\fn),k)$. As a module over $u(\fn/\fv)$
$M = \HHH^1(u(\fv),k)$ is uniserial of length ~4, 
generated by an element $\eta_{3\alpha + \beta}$.
Then $\HHH^2(u(\fv),k)$ is the exterior square of 
$M$ and it has a free $u(\fn/\fv)$-summand 
generated by $\eta_{3\alpha+\beta} \wedge 
u_{\alpha}\eta_{3\alpha+\beta}$
and having socle generated by $\eta_{\alpha+\beta}
\wedge \eta_\beta$. Hence there is
a class $\zeta$ in $E_2^{0,2}$ represented by a 
$u(\fn/\fv)$-homomorphism of $k$ onto the socle of this
summand. This class is annihilated on the $E_2$ page
of the spectral sequence by the class $X_\alpha$ 
in $E_2^{2,0}$. As in the case of Proposition 
\ref{prop:notcm1}, we can argue by weights that $\zeta$
is represented by a nonzero class $\hat{\zeta} \in 
\HHH^2(u(\fn),k)$ such that $X_\alpha \hat{\zeta} =0$,
and the annihilator of $\hat{w}$ is contained in a prime 
$\fP$ having the asserted properties. 
\end{proof}

\begin{rem}\label{rem:notorus}
We should note that the action of the torus is not 
required to show that the example is not Cohen-Macaulay.
If it were the case that one of the differentials
$d_2$ or $d_3$ failed to vanish on the class $\zeta$,
then we would have that $d_2(\zeta) = X_\alpha \mu$ for 
$\mu$ in $E_2^{0,1}$ or that $d_3(\zeta) = X_\alpha \mu$ 
for $\mu \in E_3^{1,0}$. In either case we would have
a class in $\HHH^1(u(\fn),k)$ that is annihilated
by $X_\alpha$. Similarly, there is no way to ungrade 
the spectral sequence to avoid having a large associate
prime in the cohomology ring. 
\end{rem}

\begin{rem}\label{rem:derive-rel2}
We remark, as in \ref{rem:derive-rel}, that the 
unusual relation $\eta_{3\alpha+\beta}^2 = 
\eta_{\alpha+2\beta} X_\alpha$
in Proposition \ref{prop:b2rest} can be derived
from the last Proposition. The proof is almost
exactly the same as in Remark~\ref{rem:derive-rel} and
we leave the details to the interested reader. 
\end{rem}

\begin{rem}\label{rem:b2other-primes}
The situation in Proposition \ref{prop:b2notcm} can
be extended to give examples for other primes. For 
suppose that $p > 5$, and let $\fn$ be the restricted
$p$-Lie algebra of dimension $n+1$ for some $n$ 
with $(p+3)/2 \leq n < p$, defined as follows.
A basis for $\fn$ consists of the elements $v, 
u_1, \dots, u_n$, where $u_1, \dots, u_n$ span a
commutative subalgebra, which we denote $\fv$. 
Then the product is given
by $[v,u_i] = u_{i+1}$ for $i = 1, \dots, n-1$, 
and $[v,u_n]= 0$. The $p$th-power operation is zero
on $\fn$. The algebra $\fn$ has an action of a 
two-dimensional torus such that the basis elements 
$v, u_1, \dots, u_n$ have weights $-\alpha, -\beta,
-\alpha-\beta, \dots, -(n-1)\alpha-\beta$, respectively. 
Then $M = \HHH^1(u(\fv), k)$ is an indecomposable 
uniserial module of dimension $n$ over the algebra 
$u(\fn/\fv)$. Because $n \geq (p+3)/2$, its 
exterior square $\Lambda^{2}(M)$ has a free summand.
Hence, considering the spectral sequence 
with $E_2$ term $E_2^{i,j} = \HHH^i(u(\fn/\fv),
\HHH^j(u(\fv),k)) \Rightarrow \HHH^{i+j}(u(\fn),k)$
and arguing exactly as in the proof of 
Proposition \ref{prop:b2notcm}, we get that 
$\HHH^*(u(\fn),k)$ has an associated prime $\fP$
such that  $\HHH^*(u(\fn),k)/\fP$ has Krull 
dimension at most $n$. 
\end{rem}

\section{An example of type $G_2$ in characteristic 7}
In this section we consider the nilpotent radical of a 
Borel subalgebra of the restricted Lie algebra of type $G_2$.
We obtain a similar result to that in Proposition 
\ref{sl3-rest} and Proposition \ref{prop:b2rest}.
Because the methods are also very similar to those in the
aforementioned propositions, we give only a sketch. 

\begin{prop}\label{prop:g2rest}
Suppose that $\fn$ is the nilpotent radical of a
Borel subalgebra of the restricted Lie algebra of type $G_2$.
Then as a module over the symmetric algebra $S^{*}(\fn^*)^{(1)}$,
we have that $\HHH^*(u(\fn),k) \cong S^{*}(\fn^*)^{(1)} 
\otimes \HHH^*(\fn,k)$. However, this is not an isomorphism
as rings. 
\end{prop}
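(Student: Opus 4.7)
My plan is to follow the strategy used for the $A_2$ and $B_2$ cases in Propositions \ref{sl3-rest} and \ref{prop:b2rest}. Since the Coxeter number of $G_2$ is $h = 6$ and we work in characteristic $p = 7$, the inequality $p > h$ holds, so by the Friedlander--Parshall result cited in the Introduction the spectral sequence \eqref{specseq1} collapses at $E_2$, which gives the module isomorphism $\HHH^*(u(\fn),k) \cong S^{*}(\fn^*)^{(1)} \otimes \HHH^*(\fn,k)$. Alternatively, this could be derived by iterating Theorem \ref{thm:tensoralg} through the chain of one-dimensional central extensions that builds $\fn$ from a commutative quotient.

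To compute the ordinary Lie algebra cohomology $\HHH^*(\fn,k)$ I would repeatedly apply the LHS spectral sequence $\HHH^i(\fn/\fz, \HHH^j(\fz,k)) \Rightarrow \HHH^{i+j}(\fn,k)$ for a succession of one-dimensional central ideals, exactly as in Lemmas \ref{sl3-ordin} and \ref{lem:ordcohob2}. The two-dimensional torus acts throughout, and I would index all classes by their torus weights (sums of positive roots of $G_2$). The extension-class differential on $E_2^{0,1}$ together with the weight grading determines the product structure almost completely, since in each cohomological degree most weight spaces of $\HHH^*(\fn,k)$ are one-dimensional.

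For the restricted cohomology I would run the LHS spectral sequence $\HHH^i(u(\fn/\fz), \HHH^j(u(\fz),k)) \Rightarrow \HHH^{i+j}(u(\fn),k)$ for a one-dimensional central ideal $\fz$ and then ungrade, using the torus action to read off most relations directly from the $E_\infty$ page. A product $x \cdot y$ that vanishes on the graded page can ungrade to a nonzero multiple of $z\, X_\gamma$, where $X_\gamma \in S^{*}(\fn^*)^{(1)}$ is a symmetric algebra generator, precisely when the weights match, i.e.\ when $\mathrm{wt}(x) + \mathrm{wt}(y) = \mathrm{wt}(z) + p\gamma$. Listing all such weight coincidences among the $G_2$ positive roots yields a short collection of candidate ``anomalous'' relations, by analogy with the relation $\eta_{3\alpha+\beta}^2 = \eta_{\alpha+2\beta} X_\alpha$ in the $B_2$ case.

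The main obstacle, as in the earlier examples, is to verify that at least one such coefficient is genuinely nonzero, so that the ring isomorphism truly fails rather than just fails to be forced by the grading. Two avenues are available: follow the device of Remarks \ref{rem:derive-rel} and \ref{rem:derive-rel2} by producing a one-dimensional central extension $\fn \subset \tilde{\fn}$ whose restricted cohomology ring fails to be Cohen--Macaulay, which requires a free $u(\tilde{\fn}/\fv)$-summand in a suitable exterior power of $\HHH^1(u(\fv),k)$ (Remark \ref{rem:b2other-primes} shows that for $p = 7$ this is available from a uniserial module of length $n$ in the range $5 \leq n < 7$, which fits naturally into the $G_2$ root geometry), and then extract the anomalous relation by pulling it back through the LHS spectral sequence for $\fn$; or, as in Proposition \ref{prop:b2rest}, confirm the surviving scalar by a direct Magma computation. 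Either route should deliver the promised nontrivial ring relation, contradicting any ring-level isomorphism $\HHH^*(u(\fn),k) \cong S^{*}(\fn^*)^{(1)} \otimes \HHH^*(\fn,k)$.
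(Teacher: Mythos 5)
Your proposal follows essentially the same route as the paper: the module isomorphism comes from Friedlander--Parshall since $p=7>h=6$, and the failure of the ring isomorphism is detected by building a one-dimensional central extension (with new central basis vector of weight $-4\alpha-\beta$) whose restricted cohomology is not Cohen--Macaulay because a suitable subalgebra $\fv$ has a free $u(\fg/\fv)$-summand in its degree-two cohomology, after which Theorem \ref{thm:tensoralg} yields the contradiction. The only cosmetic difference is that the paper's $\fv$ is a non-commutative six-dimensional subalgebra, so the free uniserial summand of dimension $7$ (generated by $\eta_{4\alpha+\beta}\wedge\eta_{3\alpha+\beta}$ with socle $\eta_{\alpha+\beta}\wedge\eta_\beta$) is located in $\HHH^2(\fv,k)$ computed by a further spectral sequence rather than literally in an exterior power of $\HHH^1$ of a commutative subalgebra as you suggest.
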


\begin{proof} 
The characteristic of the field $k$ is larger than the 
Coxeter number and hence the first statement is a consequence of 
\cite[(3.5) Prop.]{FP}. Our task is to prove the second statement. 
Suppose that $\alpha$ and $\beta$ are the simple roots for 
the root system of type $G_2$. Assume that $\alpha$ is the 
short root. The other positive roots are $\alpha+\beta, 2\alpha+\beta,
3\alpha+\beta, 3\alpha+2\beta$.  We construct the central extension $\fg$
\[
\xymatrix{
0 \ar[r] & \fa \ar[r] & \fg \ar[r] & \fn \ar[r] & 0 
}
\]
where $\fa$ has dimension one, and $\fg$ has an action by 
the two dimensional torus so that an element of $\fa$ has 
weight $-4\alpha-\beta$. Thus $\fg$ has basis $u_\alpha,
u_\beta, u_{\alpha+\beta}, u_{2\alpha+\beta}, u_{3\alpha+\beta}, 
u_{4\alpha+\beta}, u_{3\alpha+2\beta}$ where the subscript on 
each element indicates the negative of its weight.

Let $\fv$ be the subalgebra generated by 
$u_\beta, u_{\alpha+\beta}, u_{2\alpha+\beta}, u_{3\alpha+\beta}, 
u_{4\alpha+\beta}, u_{3\alpha+2\beta}$, and let $\fz$ be the 
subalgebra generated by $u_{3\alpha+2\beta}$. The cohomology
of $\fv$ can be computed from the spectral sequence 
$E_2^{i,j} = \HHH^i(\fv/\fz, \HHH^j(\fz,k)) \Rightarrow 
\HHH^{i+j}(\fv,k)$.  Let $\eta_\gamma$ denote an element of 
weight $\gamma$ on this $E_2$ page. The differential must 
take the element $\eta_{3\alpha+2\beta} \in E_2^{0,1}$ 
to the extension class 
$d_2(\eta_{3\alpha+2\beta}) = \eta_\beta \wedge 
\eta_{3\alpha+\beta} - \eta_{\alpha+\beta} \wedge 
\eta_{2\alpha+\beta} \in E_2^{2,0}$. 
Note that this element is annihilated
by the action of $u_\alpha$, as is $\eta_{2\alpha+\beta}$.
The point of this calculation is that $\HHH^2(\fv,k)$ contains
a free module under the action of $u(\fg/\fv)$. That is the
element $\eta_{4\alpha+\beta} \wedge \eta_{3\alpha+\beta}$
generates a uniserial module of dimension $7$ over
$u(\fg/\fv)$, whose socle (the submodule annihilated by 
the action of $u_\alpha$) is spanned by $\eta_{\alpha+\beta}
\wedge \eta_{\beta}$, a class that survives to the $E_\infty$
page of the spectral sequence.  This implies that $\HHH^{*}(u({\mathfrak g}),k)$  is not Cohen-Macaulay. 
That is, we see from the spectral sequence $E_2^{i,j} = \HHH^i(u(\fg/\fv),
\HHH^j(u(\fv),k))\Rightarrow \HHH^{i+j}(u({\mathfrak g}),k)$, that the element $X_\alpha \in E_2^{0,2}$ in the 
symmetric algebra annihilates the element corresponding 
to $\eta_{\alpha+\beta} \wedge \eta_{\beta} \in E_2^{0,2}$.
This spectral sequence collapses at the $E_2$ page, and hence
the relation exists in $\HHH^*(u(\fg),k)$. 

The proposition is a consequence of Theorem \ref{thm:tensoralg}.
More specifically, by following the arguments in Proposition
\ref{sl3-rest} and using the weight information, we see that
in $\HHH^4(u(\fn),k)$ there must be a relation having 
roughly the form $(\eta_\alpha \wedge \eta_{3\alpha+\beta})^2
= (\eta_{\alpha+\beta} \wedge \eta_{\beta}) X_\alpha$. Note that
both are elements of weight $8\alpha+2\beta$.
\end{proof}


\section{A metabelian example} \label{sec:metabelian}
In this section we present an example of a metabelian restricted
Lie algebra with the property that its cohomology ring is not 
Cohen-Macaulay. Such an example can be constructed for any value
of $p\geq 3$, except that the dimension of the example depends on 
the prime $p$.

Let $\fn$ be the nilpotent restricted Lie algebra with basis
consisting of the elements $u, v_i, w_i$ for $i = 1, \dots,
n$, and Lie bracket defined
by the rule 
\[
[u,v_i] = w_i, \quad [u,w_i] = 0 = [v_i, v_j] = [v_i, w_j] = 
[w_i, w_j]
\]
for all $i,j$ such that $1 \leq i, j, \leq n$. Note that $\fn$ 
is isomorphic to a subalgebra of $\sl_{n+2}$. That is, we can 
define a homomorphism $\varphi: \fn \to \sl_{n+2}$ as follows. 
Let $E_{i,j}$ be the matrix with $1 \in k$ in the $(i,j)$ 
position and 0 elsewhere. Then define $\varphi$ by $\varphi(u)
= E_{1,2}$, $\varphi(v_i) = E_{2,i+2}$ and $\varphi(w_i) = 
E_{1,i+2}$. The image of $\varphi$ has the property that the 
$p^{th}$-power of any element in the algebra is zero, since
$p \geq 3$.  Also, we note that the algebra has an action of 
the diagonal torus of $\sl_{n+2}$ of dimension $n+1$. 

Let $\fv$ be the subalgebra with basis consisting of all of the
elements $v_i, w_i$ for $i = 1, \dots, n$. This subalgebra is
commutative. We consider the 
spectral sequence $E_2^{r,s} = 
\HHH^r(u(\fn/\fv), \HHH^s(u(\fv), k) \Rightarrow
\HHH^{r+s}(u(\fn),k)$. The cohomology group $\HHH^1(u(\fv),k) =
\HHH^1(\fv,k)$ has dimension $2n$ and is spanned by 
elements $\gamma_i$ (of weight $\alpha_2 +\dots + \alpha_{i+1}$)
and $\eta_i$ (of weight $\alpha_1 + \dots + \alpha_{i+1}$,
for $i = 1, \dots, n$. The action of the element $u \in \fn/\fv$ on 
$\HHH^1(u(\fv),k)$ is given by $u \cdot \eta_i = \gamma_i$ and 
$u \cdot \gamma_i = 0$. Thus, $\HHH^1(u(\fv),k)$ is a direct sum 
of $n$ uniserial $u(\fn/\fv)$-modules of dimension 2. 

With this information, we can prove the following.

\begin{prop}\label{prop:metabelian}
Let $n = p-1$. Then the cohomology ring $\HHH^*(u(\fn),k)$ is
not Cohen-Macaulay.
\end{prop}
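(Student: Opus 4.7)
The plan is to adapt the argument of Propositions \ref{prop:notcm1} and \ref{prop:b2notcm}: use the Lyndon--Hochschild--Serre spectral sequence
\[
E_2^{r,s} = \HHH^r(u(\fn/\fv), \HHH^s(u(\fv),k)) \Rightarrow \HHH^{r+s}(u(\fn),k)
\]
together with the action of the diagonal torus $T \subset \sl_{n+2}$ to produce a cohomology class $\hat\zeta \in \HHH^n(u(\fn),k)$ annihilated by the nonzero polynomial generator $X = (u^*)^{(1)} \in S^*(\fn^*)^{(1)}$. By Theorem \ref{thm:CM-prelim}, the existence of such a zero divisor precludes Cohen--Macaulayness. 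Since $\fn/\fv$ is one-dimensional with trivial $p$th power, $u(\fn/\fv) \cong k[u]/(u^p)$; the only free indecomposable $u(\fn/\fv)$-module has dimension $p$.

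The key technical input is a free $u(\fn/\fv)$-summand inside $\HHH^n(u(\fv),k)$. Set $\omega = \eta_1 \wedge \eta_2 \wedge \cdots \wedge \eta_n \in \Lambda^n(\HHH^1(u(\fv),k))$. A routine Leibniz computation using $u \cdot \eta_i = \gamma_i$ and $u \cdot \gamma_i = 0$ shows that $u^n \cdot \omega = n!\,\gamma_1 \wedge \cdots \wedge \gamma_n$, and for $n = p-1$ Wilson's theorem gives $u^{p-1}\omega \neq 0$. Thus the cyclic submodule generated by $\omega$ is free of rank one over $k[u]/(u^p)$, and being injective it splits off as a summand $F$ whose socle is spanned by $\gamma_1 \wedge \cdots \wedge \gamma_n$. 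Let $\zeta \in E_2^{0,n} = \HHH^n(u(\fv),k)^{\fn/\fv}$ be the invariant class corresponding to this socle element. Because $F$ is free, $\HHH^r(u(\fn/\fv), F) = 0$ for $r \geq 1$, and therefore cup product with every $Y \in E_2^{2,0}$ kills $\zeta$ already on the $E_2$ page.

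The remaining control comes from the torus. The class $\zeta$ has weight $\mu := \sum_{j=2}^{n+1}(n-j+2)\alpha_j$, whose $\alpha_1$-coefficient is zero, while $X$ has weight $p\alpha_1$. A direct inspection of the equivariant Tate resolution of $k$ over $u(\fn/\fv)$ (in which the homological-degree $2i$ generator has weight $-ip\alpha_1$ and the degree $2i+1$ generator has weight $-(ip+1)\alpha_1$) shows that every class in $E_2^{r,s}$ with $r \geq 1$ has $\alpha_1$-coefficient at least $1$, and in fact at least $p+1$ once $r \geq 3$. Consequently no class in $E_r^{r,n-r+1}$ can carry the weight $\mu$, so every differential $d_r(\zeta)$ vanishes and $\zeta$ lifts to a nonzero $\hat\zeta \in \HHH^n(u(\fn),k)$; likewise, no class in $E_\infty^{r,n+2-r}$ with $r \geq 3$ can carry the weight $p\alpha_1 + \mu$, so the associated-graded representative of $X\hat\zeta$ is forced to sit in $E_\infty^{2,n}$, where we have already seen that it vanishes. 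Hence $X\hat\zeta = 0$ in $\HHH^{n+2}(u(\fn),k)$, and Theorem \ref{thm:CM-prelim} yields the conclusion.

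The principal obstacle is verifying the weight estimates on $E_2^{r,s}$; this is the step that genuinely uses the explicit $T$-grading and forces one to be careful about Tate shifts in the equivariant resolution. The remaining steps -- the Wilson's theorem observation, the splitting of the free summand, and the spectral-sequence bookkeeping -- are essentially formal.
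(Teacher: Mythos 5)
Your proof is correct and follows essentially the same route as the paper: the same LHS spectral sequence for $\fv\subseteq\fn$, the same free $u(\fn/\fv)$-summand of $\Lambda^{p-1}\HHH^1(u(\fv),k)$ generated by $\eta_1\wedge\cdots\wedge\eta_{p-1}$ with socle $\gamma_1\wedge\cdots\wedge\gamma_{p-1}$, and the same torus-weight argument to show the socle class survives and is killed by $X$. Your Wilson's-theorem computation $u^{p-1}\omega=(p-1)!\,\gamma_1\wedge\cdots\wedge\gamma_{p-1}\neq 0$ is a nice explicit verification of the freeness that the paper merely asserts, and your explicit $\alpha_1$-coefficient bounds on $E_2^{r,s}$ spell out the weight argument the paper leaves implicit.
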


\begin{proof}
Let $M$ denote the uniserial $u(\fn/\fv)$-module of dimension 2. As noted
$E_2^{0,1} = \HHH^1(u(\fv),k)$ is a directs sum of $p-1$ copies of 
$M$. Hence, $E_2^{0,p-1}$ contains the $p-1$ exterior power of 
$\HHH^1(u(\fv),k)$ which includes the $p-1$ tensor power of $M$. 
The $p-1$ tensor power of $M$ has a projective $u(\fn/\fv)$-module, 
the uniserial module of dimension $p$ generated by 
$\eta_1 \wedge \dots \wedge \eta_{p-1}$, and having socle 
spanned by $\gamma_1 \wedge \dots \wedge \gamma_{p-1}$. Consequently,
$E_2^{0,p-1}$ has an element $y$ of weight 
$\alpha_2 + \dots + \alpha_{p}$.
This element must survive to the $E_{\infty}$ page of the spectral 
sequence, because there is no element of the same 
weight in $E_r^{p-1-r,r+2}$
for any value of $r>0$.  On the other hand, because 
$\gamma_1 \wedge \dots \wedge \gamma_{p-1}$ is in the socle of a 
projective $u(\fn/\fv)$-module, we must have that $X.y = 0$ 
for $X$ the generator of the symmetric algebra in $E_{2.0}$. 
Again, by a weight argument we see that the product $X.y$ cannot
ungrade to an element that is not zero. Consequently, this 
relation must exist in $\HHH^*(u(\fn).k)$. This implies that $X$
is not a regular element and that the depth of $\HHH^*(u(\fn),k)$
is less than the Krull dimension.  
\end{proof}

\section{Nilpotent Lie algebras of dimension $\leq 5$}

In this section we will use deGraaf's classification 
of indecomposable nilpotent Lie algebras over 
fields of characteristic not equal to 2. Again our interest is in 
whether there is an isomorphism 
\[
\HHH^*(u({\fn}),k)\cong 
S^{*}({\fn}^{*})^{(1)}\otimes 
\Lambda^{*}({\fn}^{*})
\cong S^{*}({\fn}^{*})^{(1)} \otimes 
\HHH^{*}({\fn},k)
\]
as rings or as modules over the symmetric algebra. If the algebra
is commutative, then the isomorphism holds as rings. On the other
hand if the $p$-power operation on the Lie algebra fails to vanish 
($x^{[p]} \neq 0$ for some $x$ in $\fn$), then the above isomorphism
can not hold. 

Given a finitel dimensional nilpotent Lie algebra, it is not always
possible to make it a restricted Lie algebra with trivial $p$-power
operation. For one thing, the adjoint action of any element on the 
algebra would have to be nilpotent of degree less than $p$.
We have listed below the indecomposable 
non-abelian nilpotent Lie algebras of 
dimension less than or equal to 5 along with the restrictions
on the prime $p$ that are necessary to impose a trivial $p$-power
operation. In this case, our standing assumption 
that ${\mathcal N}_{1}({\fn})\cong {\fn}$ holds.  The notation for
the Lie algebras is taken from DeGraaf's list \cite{deG}. 
\vskip .25cm 
\noindent
Dimension 3: \ $L_{3,2}$ ($p\geq 3$)
\vskip .5cm 
\noindent
Dimension 4: \ $L_{4,3}$ ($p \geq 5$)
\vskip .5cm 
The Lie algebras $L_{3,2}$ (reps. $L_{4,3}$) arise naturally 
as the unipotent radicals of the Borel subalgebras of 
simple Lie algebras of type $A_{2}$ (resp. $B_{2}$). 
We have $L_{3,2}=\langle x_{-\alpha}, x_{-\beta}, x_{-\alpha-\beta} 
\rangle$ and $L_{4,3}=\langle x_{-\alpha}, x_{-\beta}, 
x_{-\alpha-\beta}, x_{-2\alpha-\beta} \rangle$. The restricted cohomology rings
of these algebras are given in Propositions \ref{sl3-rest} and 
\ref{prop:b2rest}.
\vskip .5cm 
\noindent
Dimension 5:  $L_{5,4}$ ($p\geq 3$), $L_{5,5}$ ($p\geq 5$), 
$L_{5,6}$ ($p\geq 5$), $L_{5,7}$ $(p\geq 5$), 
$L_{5,8}$ ($p\geq 3$), $L_{5,9}$ ($p\geq 5$).
\vskip .5cm 
We now use deGraaf's description of the five dimensional 
nilpotent Lie algebra (cf. \cite[Section 4]{deG}) and 
describe natural gradings on these 
Lie algebras. The natural gradings are induced by toral 
actions given by outer automorphisms. When we compute
the cohomology of the algebras, differentials in the spectral sequences
respect the actions of these tori. 
\vskip .25cm 
\noindent
$L_{5,4}$: This nilpotent Lie algebra arises as a 
subalgebra of the nilpotent radical of a simple Lie 
algebra of type $A_{3}$. Let
$\alpha_{1},\alpha_{2},\alpha_{3}$ denote the 
simple roots. The $L_{5,4}$ consists of the span of 
the root vectors $\{x_{-\alpha_{1}},x_{-\alpha_{3}},
x_{-\alpha_{1}-\alpha_{2}},x_{-\alpha_{2}-\alpha_{3}},
x_{-\alpha_{1}-\alpha_{2}-\alpha_{3}}\}$. 
\vskip .25cm 
\noindent 
$L_{5,5}$: This Lie algebra has a double grading with 
basis $\langle x_{-\alpha},x_{-\beta},x_{-\alpha-\beta},
x_{-2\alpha-\beta},x_{-2\alpha} \rangle$. 
\vskip .25cm 
\noindent
$L_{5,6}$: Let $W(1)=\langle e_{i}:\ i\in {\mathbb Z}\}$ 
be the Witt algebra defined over ${\mathbb Z}$ with 
Lie bracket $[e_{i},e_{j}]=(i+j)e_{i+j}$. One can 
consider the subalgebra ${\mathfrak a}=\langle e_{i}:\ i< 0\}$ 
and factor this out by the ideal 
${\mathfrak z}=\{e_{i}:\ i\leq -6\}$. The Lie algebra 
$L_{5,6}$ is the Lie algebra ${\mathfrak a}/{\mathfrak z}$ 
tensored by $k$. This has a natural ${\mathbb Z}$ grading, 
thus an action of a one-dimensional torus 
on $L_{5,6}$. This Lie algebra can also be viewed as a non-graded 
central extension of the unipotent radical of type $B_{2}$. 
\vskip .25cm 
\noindent 
$L_{5,7}$: The Lie algebra $L_{5,7}$ is a graded central 
extension of $L_{4,3}$. The Lie algebra can be graded by a two-dimensional torus and has 
basis given by $\langle x_{-\alpha},x_{-\beta},x_{\alpha-\beta},
x_{-2\alpha-\beta},x_{-3\alpha-\beta} \rangle$. 
\vskip .25cm 
\noindent 
$L_{5,8}$: Let ${\mathfrak a}$ be the nilpotent radical for the Borel subalgebra of 
a Lie algebra of type $A_{3}$, and ${\mathfrak z}$ be the 
center of this Lie algebra. The Lie algebra 
$L_{5,8}$ can be realized as ${\mathfrak a}/{\mathfrak z}$ 
and has basis (with an action of a three dimensional torus) given by 
$\langle x_{-\alpha_{1}},x_{-\alpha_{2}},x_{-\alpha_{3}},
x_{-\alpha_{1}-\alpha_{2}},x_{-\alpha_{2}-\alpha_{3}} \rangle$. 
\vskip .25cm 
\noindent 
$L_{5,9}$: One can realize this Lie algebra as another 
graded central extension of $L_{4,3}$. This Lie 
algebra has a double grading with basis 
$\langle x_{-\alpha},x_{-\beta},x_{-\alpha-\beta},
x_{-2\alpha-\beta}, x_{-\alpha-2\beta} \rangle$. 
\vskip .5cm 
The ordinary Lie algebra cohomology can be computed recursively 
using central extensions and the LHS spectral sequence. For example, 
if ${\mathfrak a}$ is a nilpotent Lie algebra and 
${\mathfrak z}$ is a one-dimensional central 
subalgebra then the LHS spectral sequence: 
$$E_{2}^{i,j}=\operatorname{H}^{i}({\mathfrak a}/{\mathfrak z},k)
\otimes \operatorname{H}^{j}({\mathfrak z},k)
\Rightarrow \operatorname{H}^{i+j}({\mathfrak a},k)$$ 
will converge after the second page 
(i.e., $E_{3}\cong E_{\infty}$). We have 
$$\operatorname{H}^{2}({\mathfrak a},k)\cong 
\operatorname{H}^{2}({\mathfrak a}/{\mathfrak z},k)/\langle 
\text{Im }\delta_{2} \rangle \oplus \langle 
\text{Ker }\hat{\delta}_{2} \rangle.$$
where $\delta_{2}:E_{2}^{0,1}\rightarrow E_{2}^{2,0}$ 
and $\hat{\delta_{2}}:E_{2}^{1,1}\rightarrow E_{2}^{3,0}$. 
With appropriate choices of central subalgebras one 
can guarantee that the differentials respect the 
gradings above. This allows us to compute the 
differentials $\delta_{2}$ and $\hat{\delta}_{2}$ inductively. 
The weight spaces for the ordinary Lie algebra 
cohomology for these Lie algebras are multiplicity 
free (i.e., one-dimensional) and given in the 
following tables. 

To aid in the computation we use some facts 
about the ordinary Lie algebra 
cohomology. For example, that if $\fg$ has dimension
$d$, then the ordinary Lie algebra cohomology vanishes 
in degrees greater than $d$. In addition there is a 
Poincar\'e duality that is also respected by the action of 
the tori. So for example, if $d$ is the dimension of the algebra
$\fn$ and if the element in $\HHH^d(\fn,k)$ has weight $\gamma$, 
then the weights of the cohomology element in degrees $d-1$ will be 
$\gamma-\zeta_1, \gamma-\zeta_2, \dots $ where $\zeta_1,
\zeta_2, \dots$ are the weights of the cohomology elements in
degree 1.

\vskip.3in

\begin{table}[htbp]
\begin{tabular}{||r|r| |r|r||}

\hline

$L_{3,2}$ &  & $L_{4,3}$   &    \\
\hline
\hline
degree &  weights  & degree & weights    \\ 
\hline
0 & $0$                                                    &    0         &    $0$ \\
1 & $\alpha$, $\beta$                              &  1           & $\alpha$, $\beta$ \\
2  & $\alpha+2\beta$, $2\alpha+\beta$   &   2           & $\alpha+2\beta$,  $3\alpha+\beta$  \\
3   & $2\alpha+2\beta$                             &   3                 &   $3\alpha+3\beta$, $4\alpha+2\beta$ \\
     &                                                         &   4                  &  $4\alpha+3\beta$    \\
\hline
\end{tabular}
\end{table}

\begin{table}[htbp]
\begin{tabular}{||r|r||}

\hline

$L_{5,4}$ &      \\
\hline
\hline
degree &  weights    \\ 
\hline
0 & $0$                                                   \\
1 & $\alpha_{1}$, $\alpha_{3}$, $\alpha_{1}+\alpha_{2}$, $\alpha_{2}+\alpha_{3}$   \\
2  & $\alpha_{1}+\alpha_{3}$, $2\alpha_{1}+\alpha_{2}$, $\alpha_{1}+\alpha_{2}+\alpha_{3}$, \\
    & $\alpha_{2}+2\alpha_{3}$, $\alpha_{1}+2\alpha_{2}+\alpha_{3}$  \\
3   & $2\alpha_{1}+3\alpha_{2}+2\alpha_{3}$, $\alpha_{1}+2\alpha_{2}+3\alpha_{3}$, $2\alpha_{1}+2\alpha_{2}+2\alpha_{3}$,   \\
     & $3\alpha_{1}+2\alpha_{2}+\alpha_{3}$, $2\alpha_{1}+\alpha_{2}+2\alpha_{3}$ \\
4   &  $2\alpha_{1}+3\alpha_{2}+3\alpha_{3}$, $3\alpha_{1}+3\alpha_{2}+2\alpha_{3}$, $2\alpha_{1}+2\alpha_{2}+3\alpha_{3}$, \\ 
& $3\alpha_{1}+2\alpha_{2}+2\alpha_{3}$     \\
5   &  $3\alpha_{1}+3\alpha_{2}+3\alpha_{3}$     \\ 
\hline
\end{tabular}
\end{table}

\begin{table}[htbp]
\begin{tabular}{||r|r||}

\hline

$L_{5,8}$ &      \\
\hline
\hline
degree &  weights    \\ 
\hline
0 & $0$                                                   \\
1 & $\alpha_{1}$, $\alpha_{2}$, $\alpha_{3}$    \\
2  & $\alpha_{1}+2\alpha_{2}$, $2\alpha_{1}+\alpha_{2}$, $\alpha_{1}+\alpha_{3}$, \\
    &   $2\alpha_{2}+\alpha_{3}$, $\alpha_{2}+2\alpha_{3}$  \\
3   & $\alpha_{1}+\alpha_{2}+2\alpha_{3}$, $2\alpha_{2}+2\alpha_{3}$, $\alpha_{1}+3\alpha_{3}+\alpha_{3}$, \\
     & $2\alpha_{1}+\alpha_{2}+\alpha_{3}$, $2\alpha_{1}+2\alpha_{2}$  \\
4   & $\alpha_{1}+3\alpha_{2}+2\alpha_{3}$, $2\alpha_{1}+2\alpha_{2}+2\alpha_{3}$, $2\alpha_{1}+3\alpha_{2}+\alpha_{3}$                                                           \\
5   &  $2\alpha_{1}+3\alpha_{2}+2\alpha_{3}$ \\                                                        
\hline
\end{tabular}
\end{table}

\begin{table}[htbr]
\begin{tabular}{||r|r| |r|r||}

\hline

$L_{5,5}$ &  & $L_{5,6}$   &    \\
\hline
\hline
degree &  weights  & degree & weights    \\ 
\hline
0 & $0$                                                    &    0         &    $0$ \\
1 & $\alpha$, $\beta$, $2\alpha$                              &  1           & $1$, $2$ \\
2  & $2\alpha+\beta$, $\alpha+2\beta$, $3\alpha$   &   2           & $3$  \\
3   & $4\alpha+2\beta$, $5\alpha+\beta$, $3\alpha+3\beta$      &   3                 &   $12$  \\
4     &    $5\alpha+3\beta$, $6\alpha+2\beta$, $4\alpha+3\beta$                                                     &   4                  &  $13$, $14$    \\
5    & $6\alpha+3\beta$                       &   5       & $15$ \\
 \hline
\end{tabular}
\end{table}

\begin{table}[htbr]
\begin{tabular}{||r|r| |r|r||}
\hline
$L_{5,7}$ &  & $L_{5,9}$   &    \\
\hline
\hline
degree &  weights  & degree & weights    \\ 
\hline
0 & $0$                                                    &    0         &    $0$ \\
1 & $\alpha$, $\beta$                         &  1           & $\alpha$, $\beta$ \\
2  & $\alpha+2\beta$, $4\alpha+\beta$   &   2           & $3\alpha+\beta$, $\alpha+3\beta$  \\
3   & $6\alpha+2\beta$, $3\alpha+3\beta$      &   3                 &   $2\alpha+4\beta$, $4\alpha+2\beta$  \\
4     &    $6\alpha+4\beta$, $7\alpha+3\beta$            &   4                  &  $4\alpha+5\beta$, $5\alpha+4\beta$    \\
5    & $7\alpha+4\beta$                       &   5       & $5\alpha+5\beta$ \\
 \hline
\end{tabular}
\end{table}

\vfill\eject

With this information about the ordinary Lie algebra 
cohomology $\text{H}^{*}({\fn},k)$ we 
can deduce structural properties about the 
restricted Lie algebra cohomology. 

\begin{thm} Let ${\fn}$ be a five-dimensional 
nilpotent Lie algebra. Then 
\begin{itemize} 
\item[(a)] $\operatorname{H}^{*}(u({\mathfrak n}),k)
\cong S^{*}({\mathfrak n}^{*})^{(1)}\otimes 
\operatorname{H}^{*}({\mathfrak n},k)$ as 
$S^{*}({\mathfrak n}^{*})^{(1)}$ module for $L_{5,4}$ 
($p\geq 3$), $L_{5,5}$ ($p\geq 5$), $L_{5,6}$ 
($p\geq 7$), $L_{5,7}$ $(p\geq 7$), $L_{5,8}$ 
($p\geq 5$), and $L_{5,9}$ ($p\geq 5$). 
In these cases the cohomology ring 
$\HHH^{*}(u({\mathfrak n}),k)$ is Cohen-Macaulay. 
\item[(b)] In addition, the above isomorphism holds 
as rings for the algebras 
$L_{5,4}$ ($p\geq 5$), $L_{5,5}$ ($p\geq 7$), $L_{5,6}$ 
($p\geq 13$), $L_{5,7}$ $(p\geq 11$), $L_{5,8}$ 
($p\geq 5$), and $L_{5,9}$ ($p\geq 5$). 
\end{itemize}
\end{thm}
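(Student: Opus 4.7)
The plan is to prove both parts by induction on dimension, expressing each five-dimensional algebra as a one-dimensional central extension of a smaller nilpotent restricted Lie algebra whose cohomology is already known. Part (a) is driven by Theorem~\ref{thm:tensoralg}, whose hypothesis requires a \emph{ring} isomorphism on the quotient, while part (b) uses Theorem~\ref{th:splitting}, which requires constructing a subalgebra $B \subseteq \HHH^{*}(u(\fn),k)$ mapping isomorphically onto $\HHH^{*}(\fn,k)$ under the edge homomorphism.

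For part (a), I would first handle $L_{5,4}$ and $L_{5,8}$ by verifying that $\dim[\fn,\fn] = 1$ and invoking Corollary~\ref{cor:dim1comm} directly. For $L_{5,5}$, $L_{5,6}$, $L_{5,7}$ and $L_{5,9}$, I would choose the one-dimensional central ideal $\fz$ picked out by the action of the torus as the extremal weight space; in each case the quotient $\fn/\fz$ is either commutative or isomorphic to $L_{4,3}$. Under the prime bounds stated in~(a), the ring isomorphism for $\HHH^{*}(u(\fn/\fz),k)$ is already available, either trivially when $\fn/\fz$ is commutative, or by Proposition~\ref{prop:b2rest} together with \cite[Theorem~3.1.1]{DNN}. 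Applying Theorem~\ref{thm:tensoralg} then yields the $S^{*}(\fn^*)^{(1)}$-module isomorphism and Cohen--Macaulayness for $\fn$.

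For part (b) the task is to upgrade the module isomorphism to a ring isomorphism. The module isomorphism from part (a) lifts a basis of $\HHH^{*}(\fn,k)$ to a free generating set for $\HHH^{*}(u(\fn),k)$ over $S^{*}(\fn^*)^{(1)}$. The weight tables above are crucial: every weight space of $\HHH^{*}(\fn,k)$ is one-dimensional, so each generator $\eta_i$ has a canonical torus-weighted lift $\tilde{\eta}_i$, and any discrepancy between $\tilde{\eta}_i\tilde{\eta}_j$ and the lift of $\eta_i\eta_j$ must lie in $\bigoplus_{r\geq 1} S^r(\fn^*)^{(1)}\otimes \HHH^{*}(\fn,k)$ of the same total weight. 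The key point is that multiplication by a generator of $S^{*}(\fn^*)^{(1)}$ shifts the weight by $p\gamma$ for some positive root $\gamma$, and for the prime bounds listed in~(b) no such shifted weight can match a weight actually appearing in $\HHH^{*}(\fn,k)$ at the total degree in question. Hence the discrepancies vanish, the $\tilde{\eta}_i$ generate a subalgebra $B \cong \HHH^{*}(\fn,k)$, and Theorem~\ref{th:splitting} completes the argument.

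The main obstacle is the weight bookkeeping in part~(b): for each $L_{5,i}$ and each relation in $\HHH^{*}(\fn,k)$ one must enumerate the weights in the higher rows of the $S^{*}(\fn^*)^{(1)}$-filtration that could host a correction term, and verify that the prime bound forbids such a term. This is most delicate for $L_{5,6}$, where the torus is only one-dimensional and weights are scalars so the relevant inequalities are tightest; the specific lower bounds on $p$ in~(b) arise exactly as the smallest values for which no weight coincidences occur in any of the products of generators.
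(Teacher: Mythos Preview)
Your approach to part (b) is essentially the paper's: lift a weight basis of $\HHH^*(\fn,k)$ and use the torus grading to rule out any correction terms of the form (element of positive $S$-degree)$\times$(ordinary cohomology class), which amounts to checking that no equation $\gamma_1+\gamma_2 = \gamma_3 + p\sigma$ with compatible cohomological degrees can hold. This is exactly how the paper proceeds, and your identification of $L_{5,6}$ as the delicate case is correct.

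Part (a), however, has two genuine gaps.

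First, $L_{5,8}$ does \emph{not} satisfy $\dim[\fn,\fn]=1$; its derived subalgebra is two-dimensional (this is visible from the weight table, since $\HHH^1(\fn,k)\cong(\fn/[\fn,\fn])^*$ has dimension three). So Corollary~\ref{cor:dim1comm} does not apply. The paper instead quotients $L_{5,8}$ by a one-dimensional central ideal to obtain the nilpotent radical of type $A_2\times A_1$, for which the ring isomorphism holds once $p\geq 5$ by \cite{DNN}, and then invokes Theorem~\ref{thm:tensoralg}.

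Second, and more seriously, your inductive step via Theorem~\ref{thm:tensoralg} does not reach the borderline primes. For $L_{5,9}$ (and likewise $L_{5,5}$) the quotient $\fn/\fz$ is $L_{4,3}$, but Proposition~\ref{prop:b2rest} shows explicitly that at $p=5$ the isomorphism for $L_{4,3}$ holds only as $S^*$-modules and \emph{fails as rings}. Since Theorem~\ref{thm:tensoralg} requires a ring isomorphism on the quotient, your argument yields the module isomorphism for $L_{5,5}$ and $L_{5,9}$ only when $p\geq 7$, not $p\geq 5$ as claimed in the theorem. The paper closes these residual cases ($L_{5,5}$ at $p=5$, $L_{5,9}$ at $p=5$, and one more) by directly verifying that the spectral sequence (\ref{eq:specseq}) collapses at $E_2$, appealing to Lemma~\ref{lem:eqdim}. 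Your proposal needs an analogous direct argument at the bottom primes.
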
 

\begin{proof} (a) There exist a one-dimensional central ideal ${\mathfrak z}$ such that ${\mathfrak n}/{\mathfrak z}$ 
 is isomorphic to (i) the nilpotent radical for a simple Lie algebra of Type $B_{2}$ for $L_{5,5}$, $L_{5,6}$, $L_{5,7}$ and $L_{5,9}$, (ii) an abelian Lie algebra 
for $L_{5,4}$ and (iii) the nilpotent radical of type $A_{2}\times A_{1}$ for $L_{5,8}$. 
Now assume that $p\geq 7$ in case (i), $p\geq 3$ in case (ii), and $p\geq 5$ in case (iii). Then 
$\operatorname{H}^{*}(u({\mathfrak n}/{\mathfrak z}),k)\cong 
S^{*}(({\mathfrak n}/{\mathfrak z})^{*})^{(1)}\otimes \operatorname{H}^{*}({\mathfrak n}/{\mathfrak z},k)$ as rings. Hence, by 
Theorem~\ref{thm:tensoralg}, $\operatorname{H}^{*}(u({\mathfrak n}),k)\cong 
S^{*}({\mathfrak n}^{*})^{(1)}\otimes 
\operatorname{H}^{\bullet}({\mathfrak n},k)$ as 
$S^{*}({\mathfrak n}^{*})^{(1)}$ module. 
 
The remaining cases when $L_{5,8}$ ($p=3$), $L_{5,5}$ ($p=5$) and $L_{5,9}$ ($p=5$) can be verified directly by showing that the 
spectral sequence (\ref{eq:specseq}) collapses. 

(b) We construct a subalgebra $B$ in 
$\text{H}^{*}(u({\mathfrak n}),k)$ which is 
isomorphic to $\text{H}^{*}({\mathfrak n},k)$. 
This is accomplished using the gradings to show that the 
following conditions cannot simultaneously hold. First,  
\[
\gamma_{1}+\gamma_{2}=\gamma_{3}+p\sigma
\]
where $\gamma_{j}$ is a weight of 
$\text{H}^{a_{j}}({\mathfrak n},k)$ for $j=1,2,3$ and $\sigma\neq 0$ 
is a weight of $S^{*}({\mathfrak n}^{*})^{(1)}$. Second, 
$$a_{1}+a_{2}=a_{3}+\text{deg}(\sigma)$$
were $\text{deg}(\sigma)$ is the cohomological degree of the element that $\sigma$ represents. 

In all the cases listed in the statement this was verified, and proves there do not exist 
two elements of weights $\gamma_1$ and $\gamma_2$ whose product is the product
of an element of the symmetric algebra with an element of weight
$\gamma_3$. Furthermore, this shows that a basis of weight vectors in 
$\text{H}^{*}({\mathfrak n},k)$ form
a subalgebra of $\text{H}^{*}(u({\mathfrak n}),k)$. 
\end{proof} 

\begin{thm} Let ${\mathfrak n}$ be a five-dimensional 
nilpotent Lie algebra. Then 
$\operatorname{H}^{*}(u({\mathfrak n}),k)$ 
is not Cohen-Macaulay in the cases that $\fn$ is 
$L_{5,7}$ for $p=5$ and $L_{5, 8}$ for $p=3$. In these cases, the depth of the 
cohomology ring is one less than the dimension. 
\end{thm}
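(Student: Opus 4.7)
The plan is to reduce each case to a construction already carried out in an earlier section and then analyze the depth with the help of the torus grading. For $L_{5,7}$ with $p=5$, I would first read off the brackets from the weighted basis $\langle x_{-\alpha}, x_{-\beta}, x_{-\alpha-\beta}, x_{-2\alpha-\beta}, x_{-3\alpha-\beta}\rangle$: one obtains $[x_{-\alpha}, x_{-\beta}]=x_{-\alpha-\beta}$, $[x_{-\alpha}, x_{-\alpha-\beta}]=x_{-2\alpha-\beta}$, $[x_{-\alpha}, x_{-2\alpha-\beta}]=x_{-3\alpha-\beta}$, and all other brackets zero, so $L_{5,7}$ is isomorphic to the algebra $\fn$ of Proposition~\ref{prop:b2notcm}. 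That proposition already produces an associated prime $\fP$ with $\HHH^*(u(\fn),k)/\fP$ of Krull dimension four, so the cohomology ring is not Cohen-Macaulay and the depth is at most four. For $L_{5,8}$ with $p=3$, realize $L_{5,8}$ as the quotient of the nilpotent radical of the Borel of $\sl_4$ by its one-dimensional center; after the relabeling $u_1=x_{-\alpha_1}$, $u_2=x_{-\alpha_2}$, $u_3=x_{-\alpha_3}$, $u_4=x_{-\alpha_1-\alpha_2}$, $u_5=x_{-\alpha_2-\alpha_3}$, the defining brackets coincide with those of the algebra studied in Proposition~\ref{prop:notcm1}, which supplies the same associated-prime statement and hence non-Cohen-Macaulayness with depth $\leq 4$.

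To conclude depth $=4$, I would exhibit a regular sequence of length four inside the polynomial subring $S^*(\fn^*)^{(1)}$. In each of the two earlier propositions the failure of Cohen-Macaulayness was localized at a single weight-specific generator (call it $X_0$): namely $X_\beta$ for $L_{5,8}$ in Proposition~\ref{prop:notcm1} and $X_\alpha$ for $L_{5,7}$ in Proposition~\ref{prop:b2notcm}. I would therefore check that the four remaining linear generators $X_1,\dots,X_4$ of $S^*(\fn^*)^{(1)}$ act regularly on $\HHH^*(u(\fn),k)$. The natural machinery is the LHS spectral sequence already used in the earlier proofs: because the torus acts on every page, one can follow the weights of the offending classes and verify that outside the principal ideal $(X_0)$ there is no further weight-cancellation available to produce torsion. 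Concretely, one shows by induction on $r$ that multiplication by $X_r$ is injective on $\HHH^*(u(\fn),k)/(X_1,\dots,X_{r-1})$ by ruling out, again for weight reasons, any class whose image could vanish after ungrading.

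The main obstacle is this second step. The non-Cohen-Macaulay conclusion is essentially bookkeeping once the identifications with Propositions~\ref{prop:notcm1} and \ref{prop:b2notcm} are made. What is genuinely delicate is showing that the depth drops by exactly one and not more: at each successive quotient one must rule out any new zero-divisors that could arise, and the obstruction that produced the single nonregular generator could in principle reappear in disguise after one reduction. I would probably cross-check the answer against the Poincaré series, using the tables of weights for $\HHH^*(\fn,k)$ given in Section~8 together with the Cohen-Macaulay formula of Corollary~\ref{cor:bc} applied to the hypothetical four-variable parameter ring, to confirm that the Hilbert function of $\HHH^*(u(\fn),k)/(X_1,\dots,X_4)$ is finite dimensional with the predicted character. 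Any discrepancy would signal additional torsion and force a further depth decrease; the absence of such a discrepancy, verified by matching dimensions in each graded piece, completes the argument that depth equals four.
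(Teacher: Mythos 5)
Your proposal matches the paper's own proof, which consists entirely of the observation that $L_{5,7}$ for $p=5$ and $L_{5,8}$ for $p=3$ are exactly the algebras treated in Propositions \ref{prop:b2notcm} and \ref{prop:notcm1} respectively, so the associated prime of coheight four found there immediately rules out Cohen--Macaulayness; your identifications of the brackets are correct. Your further sketch of a length-four regular sequence to show the depth is exactly $d-1$ addresses a point the paper leaves implicit (its proof is only the two citations, which by themselves give only the upper bound $\mathrm{depth}\le 4$), so while that step remains unexecuted in your write-up, you are not missing anything the paper itself supplies.
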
 

\begin{proof}
The results are proved in Propositions \ref{prop:notcm1} 
and \ref{prop:b2notcm}.
\end{proof}

We remark that the work in \cite{BC1, BC2} can be adapted for restricted Lie algebra cohomology to show that  
in the case when the cohomology ring has depth at least one less than the Krull dimension, the cohomology 
ring is Cohen-Macaulay if and only if it satisfies the functional equation in Theorem~\ref{thm:bc}. We can 
conclude that the cohomology rings for $L_{5,7}$ ($p=5$) and $L_{5, 8}$ ($p=3$) do not satisfy the 
functional equation.

\section{Type $A_{3}$, $p>h$} 

From the Examples \ref{sl3-rest}, \ref{prop:b2rest}, \ref{prop:g2rest},
one might get the impression that if ${\mathfrak n}$ is a 
unipotent radical of a Borel subalgebra of a simple Lie algebra 
and $h<p <2(h-1)$, then the restricted cohomology is not isomorphic
as rings to the tensor of the symmetric algebra and the ordinary Lie
algebra cohomology.  However, this is not true. In this section
we analyze the smallest example of this sort where the ring isomorphism
exists. In this case, the root system $\Phi$ is of type $A_{3}$ 
and ${\mathfrak n}$ is the six dimensional Lie algebra of 
strictly upper triangular $4\times 4$ matrices over 
a field of characteristic $5$. In the DeGraaf 
notation this is the Lie algebra $L_{6,19}(\epsilon)$. 

Since $p>h$, the ordinary Lie algebra cohomology is given by Kostant's 
theorem.  A proof in characteristic $p$ with $p>h$ 
is found in \cite[Theorem 4.1.1]{UGA}. As a module for $T$ we have 
\begin{equation} \label{eq:Kostant}
\operatorname{H}^{n}({\mathfrak n},k)\cong \bigoplus_{w\in W,\  l(w)=n} -w\cdot 0
\end{equation} 
As before we are using the convention that ${\mathfrak n}$ 
consists of negative root vectors. Let 
$\Delta=\{\alpha_{1},\alpha_{2},\alpha_{3}\}$ 
denote the simple root vectors. Then (\ref{eq:Kostant}) 
can be used to produce the following table which describes the weights in 
the cohomology groups $\operatorname{H}^{n}({\mathfrak n},k)$. 

\begin{table}[htbp]
\begin{tabular}{||r|r||}

\hline

$L_{6,19}(\epsilon)$ &      \\
\hline
\hline
degree &  weights    \\ 
\hline
0 & $0$                                                   \\
1 & $\alpha_{1}$, $\alpha_{2}$, $\alpha_{3}$    \\
2  & $\alpha_{1}+2\alpha_{2}$, $\alpha_{1}+\alpha_{3}$, $2\alpha_{1}+\alpha_{2}$, \\
    &   $\alpha_{2}+2\alpha_{3}$, $2\alpha_{2}+\alpha_{3}$  \\
3   & $2\alpha_{1}+2\alpha_{2}$, $\alpha_{1}+2\alpha_{2}+3\alpha_{3}$, $\alpha_{1}+3\alpha_{2}+\alpha_{3}$, \\
     & $2\alpha_{1}+\alpha_{2}+2\alpha_{3}$, $2\alpha_{2}+2\alpha_{3}$, $3\alpha_{1}+2\alpha_{2}+\alpha_{3}$  \\
4   & $2\alpha_{1}+2\alpha_{2}+3\alpha_{3}$, $2\alpha_{1}+4\alpha_{2}+2\alpha_{3}$, $\alpha_{1}+3\alpha_{2}+3\alpha_{3}$ \\
     & $3\alpha_{1}+3\alpha_{2}+\alpha_{3}$, $3\alpha_{1}+2\alpha_{2}+2\alpha_{3}$  \\
5   &  $2\alpha_{1}+4\alpha_{2}+3\alpha_{3}$, $3\alpha_{1}+3\alpha_{2}+3\alpha_{3}$, $3\alpha_{1}+4\alpha_{2}+2\alpha_{3}$ \\                                                        
6   &  $3\alpha_{1}+4\alpha_{2}+3\alpha_{3}$ \\                                                        
\hline
\end{tabular}
\end{table}

\begin{thm} Let ${\mathfrak n}$ be the unipotent radical 
corresponding to the simple group with root system $A_{3}$ 
(i.e., $4\times 4$ upper triangular matrices) with $p>h$. Then 
$$\operatorname{H}^{*}(u({\mathfrak n}),k)\cong 
S^{*}({\mathfrak n}^{*})^{(1)}\otimes 
\operatorname{H}^{*}({\mathfrak n},k)$$ 
as rings. 
\end{thm}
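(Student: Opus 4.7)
The plan is to proceed in two stages: first obtain the $S^{*}(\fn^*)^{(1)}$-module isomorphism, then upgrade it to a ring isomorphism via Theorem~\ref{th:splitting}. Since $p > h$, Friedlander--Parshall \cite{FP} tells us that the spectral sequence (\ref{eq:specseq}) collapses at the $E_2$ page; equivalently, by Corollary~\ref{cor:bc}, $\HHH^{*}(u(\fn),k)$ is Cohen--Macaulay and free as a module over $S^{*}(\fn^*)^{(1)}$. For $p \geq 2(h-1) = 6$ (i.e.\ $p \geq 7$) the ring isomorphism is \cite[Theorem 3.1.1]{DNN}, so the genuinely new content is the case $p = 5$. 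By Theorem~\ref{th:splitting}, it suffices to produce a subalgebra $B \subseteq \HHH^{*}(u(\fn),k)$ that maps isomorphically onto $\HHH^{*}(\fn,k)$ under the edge homomorphism.

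To construct $B$, I would use the torus action. By Kostant's theorem (\ref{eq:Kostant}) the weight spaces of $\HHH^{*}(\fn,k)$ are one-dimensional (the table above exhibits this multiplicity-freeness). Since the module isomorphism is $T$-equivariant, for each weight $\gamma$ appearing in $\HHH^{*}(\fn,k)$ in degree $a$, I can choose a nonzero weight vector $\tilde{\eta}_{\gamma} \in \HHH^{a}(u(\fn),k)$ mapping to the generator $\eta_{\gamma}$ under the edge map. Let $B$ be the $k$-span of these lifts; the task is to show that $B$ is closed under multiplication, hence is a subalgebra isomorphic to $\HHH^{*}(\fn,k)$.

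The obstruction to closure is the following. Under the fixed module decomposition, the product $\tilde{\eta}_{\gamma_1} \tilde{\eta}_{\gamma_2}$ decomposes as a sum of terms of the form $\sigma \otimes \tilde{\eta}_{\gamma_3}$, where $\sigma$ is a weight vector of $S^{*}(\fn^*)^{(1)}$, subject to the two constraints
\[
\gamma_1 + \gamma_2 \;=\; \gamma_3 + p\,\mathrm{wt}(\sigma), \qquad a_1 + a_2 \;=\; a_3 + 2\deg(\sigma).
\]
Exactly as in the dimension-$5$ argument of the previous section, if no such simultaneous solution with $\sigma \neq 1$ exists, then every product $\tilde{\eta}_{\gamma_1}\tilde{\eta}_{\gamma_2}$ already lies in $B$, and we are done.

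The main obstacle is the combinatorial verification that the two displayed equations have no common solution with $\sigma \neq 1$. The saving feature when $p = 5$ is a sharp a priori bound on the weights: reading from the table for $L_{6,19}(\epsilon)$, every weight of $\HHH^{*}(\fn,k)$ has $\alpha_1$- and $\alpha_3$-coefficient at most $3$ and $\alpha_2$-coefficient at most $4$, so $\gamma_1 + \gamma_2 - \gamma_3$ has coefficients bounded by $6$ in $\alpha_1, \alpha_3$ and by $8$ in $\alpha_2$. For this to equal $5\sigma$ with $\sigma$ a nonzero nonnegative integer combination of positive roots, each simple-root coefficient of $\sigma$ must be at most $1$, leaving only the six positive roots of $A_3$ together with $\alpha_1 + \alpha_3$ as candidates. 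For each such candidate one reads off $\deg(\sigma)$, and then $\gamma_3 = \gamma_1 + \gamma_2 - 5\sigma$ and $a_3 = a_1 + a_2 - 2\deg(\sigma)$ must match an entry of the table. I expect this finite case analysis to rule out every potential triple, completing the proof. This bookkeeping --- checking each candidate $\sigma$ against the admissible weight/degree pairs --- is the real work, and would be the step most prone to overlooking a subcase.
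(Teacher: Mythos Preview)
Your approach is essentially the same as the paper's: reduce to $p=5$ via \cite{DNN}, invoke Theorem~\ref{th:splitting}, and use the torus weights to rule out simultaneous solutions of the weight equation $\gamma_1+\gamma_2=\gamma_3+5\sigma$ and the degree equation $a_1+a_2=a_3+2\deg(\sigma)$ by a finite case check (the paper likewise leaves this as an unwritten case-by-case verification, noting that the weight equation alone \emph{does} admit solutions). One small caveat: for a given weight $\sigma$ the symmetric degree $\deg(\sigma)$ is not uniquely determined (e.g.\ $\alpha_1+\alpha_2+\alpha_3$ occurs in $S^1$, $S^2$, and $S^3$), so your case analysis must range over all admissible degrees, not just one per weight.
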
 

\begin{proof} For $p>2(h-1)$ we can apply \cite[Theorem 3.1.1]{DNN}. 
This leaves us with the case when $p=5$. 
In order to invoke Theorem~\ref{th:splitting}, we need 
to analyze the equation 
\begin{equation} \label{eq:condition1}
-w_{1}\cdot 0-w_{2}\cdot 0=p\sigma-w_{3}\cdot 0
\end{equation}  
where $-w_{j}\cdot 0$ is a weight of 
$\text{H}^{a_{j}}({\mathfrak n},k)$ for $j=1,2,3$ and $\sigma$ 
is a weight of $S^{*}({\mathfrak n}^{*})^{(1)}$. 
Furthermore, by consideration
of cohomological degrees we must have that 
\begin{equation} \label{eq:condition2}
l(w_{1})+l(w_{2})-l(w_{3})=2\text{deg}(\sigma) 
\end{equation} 
where $\text{deg}(\sigma)$ is the cohomological degree 
corresponding to the element of weight $\sigma$. 

The first equation (\ref{eq:condition1}) does have solutions. For example, 
$$(2\alpha_{1}+4\alpha_{2}+3\alpha_{3})+(3\alpha_{1}+3\alpha_{2}+3\alpha_{3})
=(2\alpha_{2}+\alpha_{3})+5(\alpha_{1}+\alpha_{2}+\alpha_{3}).$$
Here $l(w_{1})=l(w_{2})=5$ and $l(w_{3})=2$. However, 
$\text{deg}(\sigma)\leq 6$. Thus, the second 
equation (\ref{eq:condition2}) cannot hold. A 
careful, case by case analysis rules out the possibility 
that both equations could simultaneously be satisfied. 
\end{proof} 

We end the paper with the following intriguing question. 

\begin{quest}
Suppose that $\fn$ is the nilpotent radical of a Borel 
subalgebra of a Lie algebra arising from a reductive algebraic group. 
In the case that the root system $\Phi$ is simply laced 
and that $p >h$, is there always a ring isomorphism 
$$\operatorname{H}^{*}(u({\mathfrak n}),k)\cong 
S^{*}({\mathfrak n}^{*})^{(1)}\otimes 
\operatorname{H}^{*}({\mathfrak n},k)?$$ 
\end{quest}

\vskip.3in

\end{document}